\newtheorem{theorem}{Theorem}[section]
\newtheorem{lemma}{Lemma}[section]
\renewcommand{\v}[1]{\boldsymbol{#1}}
\title[Finding well approximating lattices]{Finding well approximating lattices for a finite set of points}
\author{A. Hajdu}
\address{Faculty of Informatics, University of Debrecen \newline\indent P. O. Box 12, H-4010 Debrecen, Hungary}
\email{hajdu.andras@inf.unideb.hu}
\author{L. Hajdu}
\address{Institute of Mathematics, University of Debrecen \newline\indent P. O. Box 12, H-4010 Debrecen, Hungary}
\email{hajdul@science.unideb.hu}
\author{R. Tijdeman}
\address{Mathematical Institute, Leiden University\newline
\indent Postbus 9512, 2300 RA Leiden, The Netherlands}
\email{tijdeman@math.leidenuniv.nl}
\thanks{Research was supported in part by the OTKA grants K100339, NK101680, K115479 and the projects T\'AMOP-4.2.2.C-11/1/KONV-2012-0001 and VKSZ\_14-1-2015-0072 supported by the European Union, co-financed by the European Social Fund.}
\subjclass[2010]{11J13, 11H06}
\keywords{Diophantine approximation, lattice, LLL algorithm, least squares algorithm}
\date{\today}
\begin{document}

\begin{abstract}
In this paper we address the problem of finding well approximating lattices for a given finite set $A$ of points in ${\mathbb R}^n$. More precisely, we search for $\v{o},\v{d_1}, \dots,\v{d_n}\in \mathbb{R}^n$ such that $\v{a}-\v{o}$ is close to $\Lambda=\v{d_1}\mathbb{Z}+\dots+\v{d_n}\mathbb{Z}$ for every $\v{a}\in A$. First we deal with the one-dimensional case, where we show that in a sense the results are almost the best possible. These results easily extend to the multi-dimensional case where the directions of the axes are given, too. Thereafter we treat the general multi-dimensional case. Our method relies on the LLL algorithm. Finally we apply the least squares algorithm to optimize the results. We give several examples to illustrate our approach.
\end{abstract}

\maketitle

\section{Introduction}
\label{intro}

We address the following problem: given a finite set $A$ of points in ${\mathbb R}^n$ which do not fit in a hyperplane, find $\v{o},\v{d_1},\dots,\v{d_n}\in\mathbb{R}^n$ such that the distance of $\v{a}-\v{o}$ to the lattice $\Lambda:=\v{d_1}\mathbb{Z}+ \dots +\v{d_n}\mathbb{Z}$ is relatively small for every $\v{a}\in A$. We shall call $\Lambda$ a {\it well approximating lattice}, ignoring that a shift is made from the origin to $\v{o}$.
As usual, for $U\subset \mathbb{R}^n$, $\v{v}\in \mathbb{R}^n$ and $r\in \mathbb{R}$ we write $U+\v{v}$ for $\{\v{u}+\v{v} : \v{u} \in U\}$ and $rU$ for $\{r\v{u} : \v{u} \in U\}$. Vectors will always be denoted by boldface letters.

Let $A=\{\v{a_1},\dots,\v{a_k}\}\subset {\mathbb R}^n$ be given. If $k \leq n+1$, there is an optimal solution for the problem. In the sequel we assume $k>n+1$. Of course, we can make the distances arbitrarily small by choosing $\v{d_1}, \dots,\v{d_n}$ extremely small. Therefore we need a measure which enables us to compare the quality of solutions in a fair way. To do so we introduce the maximum norm $N_{\Lambda,\v{o}}(A)$ and the square norm $N_{\Lambda,\v{o}}^{(2)}(A)$ by
$$
N_{\Lambda,\v{o}}(A) :=  \max_{\v{a} \in A} \frac{|\v{a}-\v{o}-\Lambda|}{\Delta}\left( {\frac {{\rm diam}~A}{ \Delta}}\right)^{\frac{n}{k-n-1}}
$$
and
$$
N^{(2)}_{\Lambda,\v{o}}(A) := \frac{\sqrt{ \sum_{\v{a}\in A} |\v{a}-\v{o}-\Lambda|^2} }{ \Delta}\left( {\frac {{\rm diam}~A}{ \Delta}}\right)^{\frac{n}{k-n-1}},
$$
where $\Delta$ is the $n$-th root of the lattice determinant of $\Lambda$, and diam$(A)$ is the diameter of $A$.
We set
$$
N(A)=\inf\limits_{\Lambda,\v{o}}N_{\Lambda,\v{o}}(A)\ \ \ \text{and}\ \ \ N^{(2)}(A)= \inf\limits_{\Lambda,\v{o}}N^{(2)}_{\Lambda,\v{o}}(A).
$$
In Section \ref{mainres}, after Theorem \ref{thm2.4} we explain why the above choice of the norms is appropriate in case $n=1$.

We note that the problem in dimension one is close to that of finding approximate greatest common divisors of integers; see the papers \cite{h-g} and \cite{ch}. However, our problem is different, because we approximate by real lattices and, moreover, allow the origin to shift to $\v{o}$. On the other hand, in \cite{h-g} and \cite{ch} algorithms are given to provide all solutions satisfying some condition, whereas we shall provide solutions without claiming completeness or optimality.

In Section \ref{mainres}, we deal with the one-dimensional case. Theorems \ref{thm2.1} and \ref{thm2.2} provide upper bounds for the above norms $N_{\Lambda,\v{o}}(A)$, $N_{\Lambda,\v{o}}^{(2)}(A)$, $N(A)$, $N^{(2)}(A)$. Our main tools are simultaneous Diophantine approximation and (the theory of) the LLL algorithm \cite{lll}. Theorem \ref{thm2.3} yields that the bounds for $N(A)$ are rather sharp. Theorem \ref{thm2.4} shows that homogeneous simultaneous Diophantine approximation appears in a natural way in the study of the problem. Finally, Theorem \ref{thm2.5} tells that if a very well approximating lattice $\Lambda$ exists, the LLL algorithm should find a well approximating lattice.  In Section \ref{1d}, we extend the algorithmic method of Section \ref{mainres} to the multi-dimensional case by applying it to each coordinate axis. In Section \ref{nd} we generalize the method of Section \ref{mainres} to the multi-dimensional case in a simultaneous way. We prove that our strategy provides a good approximation if a very good approximation exists. Finally, in Section \ref{finetuning}, we use the least squares algorithm to optimize the numerical results with respect to the $N^{(2)}(A)$-norm. We illustrate the various methods by examples.

\section{The one-dimensional case}
\label{mainres}

Let $n=1$ and $A=\{a_1,\dots,a_k\}\subset {\mathbb R}$ be given. Then we have, for $o,d$ in ${\mathbb R}$, the maximum norm
$$
N_{d,o}(A)=\frac{ \max\limits_{i=1,\dots,k} |a_i-o-d\mathbb{Z}|}{d} \left( \frac{ \max\limits_{i<j} |a_i-a_j|}{d} \right)^{\frac{1}{k-2}},
$$
and the square norm
$$
N^{(2)}_{d,o}(A) = \frac{ \sqrt{\sum_{i=1}^k |a_i-o-d\mathbb{Z}|^2}}{d} \left( \frac{ \max\limits_{i<j} |a_i-a_j|}{d} \right)^{\frac{1}{k-2}}.
$$
\noindent We give bounds for
$$
N(A)=\inf_{d,o} N_{d,o}(A),\ \ \ N^{(2)}(A)=\inf_{d,o} N^{(2)}_{d,o}(A)
$$
taken over all $d\in {\mathbb R}$ with $0< d \leq {\rm diam} (A)$ and $o \in \mathbb{R}$, and construct pairs $d,o$ which provide good simultaneous approximations. The upper bound on $d$ is to avoid large $d$'s, which would result in the trivial $N(A)=0$.

\begin{theorem}
\label{thm2.1}
For any finite subset $A$ of ${\mathbb R}$ we have $N(A)<1$ and $N^{(2)}(A) < \sqrt{k-2}$.
\end{theorem}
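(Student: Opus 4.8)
The plan is to exhibit a single explicit pair $(d,o)$ whose two normalised quantities already lie below $1$ and $\sqrt{k-2}$; no real optimisation over lattices is needed, just one good lattice. A blunt choice already settles the theorem: taking $d={\rm diam}\,A$ and $o=\min_i a_i$, every $a\in A$ lies within ${\rm diam}\,A/2$ of the coset $o+d\mathbb{Z}$ and the penalty factor $({\rm diam}\,A/d)^{1/(k-2)}$ equals $1$, so $N_{d,o}(A)\le 1/2$; since the points realising $\min A$ and $\max A$ lie exactly on that coset, $N^{(2)}_{d,o}(A)\le\sqrt{k-2}/2$ as well. But I would rather present the argument that matches the paper's toolkit and will generalise, one based on simultaneous Diophantine approximation. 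First I would normalise: relabel so that $a_1,a_2$ realise the diameter and translate so that $a_1=0$ and $a_2=D:={\rm diam}\,A$; then every $a_i$ lies in $[0,D]$, and I put $\alpha_i:=a_i/D\in[0,1]$.

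Next I would take $d=D/q$ with $q$ a positive integer and $o=0$. Then $a_1=0$ and $a_2=qd$ lie on the coset $o+d\mathbb{Z}=d\mathbb{Z}$ and contribute $0$ to both norms, while for $i\ge 3$ we have $|a_i-o-d\mathbb{Z}|/d=|q\alpha_i-\mathbb{Z}|$ and $({\rm diam}\,A/d)^{1/(k-2)}=q^{1/(k-2)}$. Applying Dirichlet's theorem on simultaneous Diophantine approximation to the $k-2$ real numbers $\alpha_3,\dots,\alpha_k$ — in its pigeonhole form, which gives a strict estimate — I obtain a positive integer $q$ with $|q\alpha_i-\mathbb{Z}|<q^{-1/(k-2)}$ for $i=3,\dots,k$. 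For this $q$, with $d=D/q\in(0,D]$ and $o=0$,
$$
N_{d,o}(A)=\Bigl(\max_{3\le i\le k}|q\alpha_i-\mathbb{Z}|\Bigr)\,q^{1/(k-2)}<q^{-1/(k-2)}\cdot q^{1/(k-2)}=1,
$$
so $N(A)<1$. For the square norm only $a_3,\dots,a_k$ contribute, hence $\sqrt{\sum_{a\in A}|a-o-d\mathbb{Z}|^2}\le\sqrt{k-2}\,\max_{a\in A}|a-o-d\mathbb{Z}|$, giving $N^{(2)}_{d,o}(A)\le\sqrt{k-2}\,N_{d,o}(A)<\sqrt{k-2}$, i.e.\ $N^{(2)}(A)<\sqrt{k-2}$.

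The step I expect to matter most is the normalisation: one must anchor on the two points realising the diameter. Anchoring on an arbitrary pair forces $d=|a_2-a_1|/q$, so the penalty factor becomes $(D/|a_2-a_1|)^{1/(k-2)}\,q^{1/(k-2)}\ge q^{1/(k-2)}$, and Dirichlet's bound then only gives $N_{d,o}(A)<(D/|a_2-a_1|)^{1/(k-2)}$, which may exceed $1$. With the correct anchoring, the exponent $1/(k-2)$ is matched exactly by Dirichlet applied to precisely $k-2$ quantities, and the estimate closes. The only remaining care is using the strict form of the pigeonhole bound, which is automatic with half-open boxes. I would also note that a suitable $q$ can be found effectively by the LLL algorithm, and that for point sets with well approximable ratios $\alpha_i$ this construction yields far smaller values — apparently the thread continued in Theorems \ref{thm2.2} and \ref{thm2.5}.
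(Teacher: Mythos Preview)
Your argument is correct and follows essentially the same route as the paper: normalise so that the two extreme points become $0$ and $1$, apply Dirichlet's simultaneous approximation theorem to the $k-2$ remaining ratios to obtain $q$ with $\|q\alpha_i\|<q^{-1/(k-2)}$, and take $o$ at an extreme and $d=\mathrm{diam}(A)/q$; the paper's proof is literally this (it refers back to the proof of Theorem~\ref{thm2.2} with Dirichlet replacing the LLL-based lemma), differing only in labelling the extremes $a_1,a_k$ rather than $a_1,a_2$. Your side remark that the trivial choice $d=\mathrm{diam}(A)$, $o=\min_i a_i$ already gives $N_{d,o}(A)\le 1/2$ and $N^{(2)}_{d,o}(A)\le\sqrt{k-2}/2$ is a nice bonus the paper does not record.
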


\noindent In fact the proof of Theorem \ref{thm2.1}  implies that there are arbitrarily small $d>0$ such that $N_{d,a_1}(A)< 1.$

It is important to efficiently construct well approximating lattices. In this direction we prove the following result.

\begin{theorem}
\label{thm2.2}
For any finite subset $A$ of ${\mathbb R}$ one can find $d \in {\mathbb R_{>0}}$ with $N_{d,a_1}(A)<2^{(k-1)/4}$ and $N^{(2)}_{d,a_1}(A) < 2^{(k-1)/4}\sqrt{k-2}$ in polynomial time.
\end{theorem}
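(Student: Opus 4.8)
\medskip
\noindent\textit{Proof plan.} The plan is to produce, via the LLL algorithm \cite{lll}, an effective simultaneous rational approximation of the numbers $a_i-a_1$ with a common denominator $q$, to take $d=1/q$, and to tune the single free parameter of the underlying lattice so that the factor $2^{(k-1)/4}$ which LLL loses against an exact lattice-point bound is exactly the constant in the statement; this is the effective counterpart of the argument behind Theorem~\ref{thm2.1}. (For the complexity assertion we assume, as usual, that the $a_i$ are rational.) Since $N_{\lambda d,\lambda a_1}(\lambda A)=N_{d,a_1}(A)$ and $N^{(2)}_{\lambda d,\lambda a_1}(\lambda A)=N^{(2)}_{d,a_1}(A)$ for every $\lambda>0$, and since here --- unlike in the definition of $N(A)$ --- no upper bound on $d$ is imposed, we may rescale $A$ by a suitable positive rational and assume $D:={\rm diam}(A)<2^{-(k-1)/2}$. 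Put $\alpha_i=a_i-a_1$ for $2\le i\le k$, write $\|x\|$ for the distance of $x\in\mathbb R$ to the nearest integer, and $\|\v{v}\|$ for the Euclidean length of a vector $\v{v}$. A direct computation shows that for a positive integer $q$ and $d=1/q$,
\[
N_{d,a_1}(A)=\Bigl(\max_{2\le i\le k}\|q\alpha_i\|\Bigr)(qD)^{1/(k-2)},\qquad N^{(2)}_{d,a_1}(A)=\Bigl(\sum_{i=2}^{k}\|q\alpha_i\|^{2}\Bigr)^{1/2}(qD)^{1/(k-2)} ,
\]
so it suffices to find $q\ge 1$ for which $\max_i\|q\alpha_i\|$ is small while $q$ is not too large.

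For a parameter $c>0$ let $L\subset\mathbb R^{k}$ be the lattice spanned by the first $k-1$ standard basis vectors $\v{e_1},\dots,\v{e_{k-1}}$ together with $\v{v}=(\alpha_2,\dots,\alpha_k,c)$, so that $\det L=c$. Running LLL on $L$ yields, in polynomial time, a reduced basis whose first vector $\v{b}$ satisfies $\|\v{b}\|\le 2^{(k-1)/4}(\det L)^{1/k}=2^{(k-1)/4}c^{1/k}$. Choose $c$ to be a rational in the interval $\bigl(D^{k}2^{k(k-1)/4},\,2^{-k(k-1)/4}\bigr)$, which is nonempty precisely because $D<2^{-(k-1)/2}$. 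The upper bound on $c$ forces $\|\v{b}\|<1$; writing $\v{b}=\sum_{j=1}^{k-1}x_j\v{e_j}+q\v{v}=(x_1+q\alpha_2,\dots,x_{k-1}+q\alpha_k,\,qc)$ with $x_j,q\in\mathbb Z$, this in turn forces $q\neq 0$, since otherwise $\v{b}$ would be a nonzero integer vector, hence of norm at least $1$; after replacing $\v{b}$ by $-\v{b}$ if necessary we may assume $q\ge 1$, and we set $d=1/q$.

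It remains to bound the two norms for this $d$. From the first $k-1$ coordinates of $\v{b}$ one gets $\max_i\|q\alpha_i\|\le\|\v{b}\|$ and in fact $\sum_{i=2}^{k}\|q\alpha_i\|^{2}\le\|\v{b}\|^{2}$, while the last coordinate gives $q\le\|\v{b}\|/c$. Substituting these into the two formulas above and using $\|\v{b}\|\le 2^{(k-1)/4}c^{1/k}$, a short calculation yields
\[
N_{d,a_1}(A),\ N^{(2)}_{d,a_1}(A)\ \le\ 2^{(k-1)^{2}/(4(k-2))}\,c^{-1/(k(k-2))}\,D^{1/(k-2)} .
\]
At $c=D^{k}2^{k(k-1)/4}$ the powers of $c$ and of $D$ combine so that the right-hand side equals $2^{(k-1)/4}$, and it is strictly decreasing in $c$; hence our choice $c>D^{k}2^{k(k-1)/4}$ makes it strictly smaller than $2^{(k-1)/4}$. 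Since $k\ge 3$ we have $2^{(k-1)/4}\le 2^{(k-1)/4}\sqrt{k-2}$, so this single $d$ satisfies both inequalities of the theorem, and --- the rescaling, the choice of $c$, and LLL all running in polynomial time --- the whole construction is polynomial.

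The point requiring care is the joint calibration in the second paragraph: $\det L=c$ has to be small enough that the first LLL vector has norm below $1$, which is precisely what guarantees that its last coordinate, and hence the denominator $q$, does not vanish; yet $c$ has to be large enough compared with $D^{k}$ for the two opposing estimates --- on $\max_i\|q\alpha_i\|$ and on $q$ --- to balance to the constant $2^{(k-1)/4}$ rather than to a larger power of two. These two demands on $c$ are simultaneously satisfiable exactly when ${\rm diam}(A)<2^{-(k-1)/2}$, which is why the (harmless) rescaling is done first.
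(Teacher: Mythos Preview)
Your proof is correct and follows the same basic idea as the paper --- produce a simultaneous rational approximation via LLL and read off $d$ from the resulting denominator $q$ --- but the implementations differ in a few notable ways. The paper normalizes by $(a_k-a_1)$ so that $\alpha_i=(a_i-a_1)/(a_k-a_1)\in[0,1]$, applies a cited black-box result (Lemma~8 of Bosma--Smeets) to the $k-2$ interior numbers $\alpha_2,\dots,\alpha_{k-1}$, and sets $d=(a_k-a_1)/q$; thus both endpoints $a_1,a_k$ land exactly on the shifted lattice and automatically $d\le{\rm diam}(A)$. You instead rescale $A$ so that ${\rm diam}(A)$ is small, run LLL directly on a rank-$k$ lattice built from all $k-1$ differences $a_i-a_1$, and take $d=1/q$. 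Your route is more self-contained (no external lemma needed) and, because you bound $\sum_i\|q\alpha_i\|^2$ rather than $(k-2)\max_i\|q\alpha_i\|^2$, it actually yields the sharper bound $N^{(2)}_{d,a_1}(A)<2^{(k-1)/4}$, beating the stated $2^{(k-1)/4}\sqrt{k-2}$. The price is that your $d$ need not satisfy $d\le{\rm diam}(A)$, though, as you correctly observe, the theorem does not require this.
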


Theorem \ref{thm2.1} is in some sense the best possible:

\begin{theorem}
\label{thm2.3}
There exist arbitrarily large finite subsets $A$ of $\mathbb{R}$ such that $N(A)>c_1$, where $c_1$ is a positive number depending only on $A$.
\end{theorem}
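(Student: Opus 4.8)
The plan is to reduce the quantity $N(A)$ to a one-parameter simultaneous approximation problem and then to build $A$ out of a badly approximable tuple so that the relevant infimum is bounded away from $0$. First I would normalise: since $N_{d,o}$ and hence $N$ are invariant under translation and under scaling of $A$, I may assume $a_1=0$ and $\mathrm{diam}(A)=1$, so that $A=\{0,\beta_2,\dots,\beta_k\}$ with $0<\beta_i\le 1$, and the constraint $0<d\le\mathrm{diam}(A)$ becomes $0<d\le 1$. I am free to choose the $\beta_i$, and the choice will be $\beta_k=1$ together with $\beta_2,\dots,\beta_{k-1}$ equal to the coordinates, reduced mod $1$, of a \emph{badly approximable} $(k-2)$-tuple: a tuple for which there is a constant $c=c(\beta_2,\dots,\beta_{k-1})>0$ with $\max_{2\le i\le k-1}\|n\beta_i\|>c\,n^{-1/(k-2)}$ for every integer $n\ge 1$, where $\|x\|$ denotes the distance from $x$ to the nearest integer. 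Such tuples exist in every dimension — for instance one may take $1,\beta_2,\dots,\beta_{k-1}$ to be a $\mathbb{Q}$-basis of a real algebraic number field of degree $k-1$, the lower bound then being a Liouville-type estimate of Perron. Note that the Dirichlet exponent $1/(k-2)$ of a $(k-2)$-tuple matches exactly the exponent $\tfrac{1}{k-2}$ occurring in $N_{d,o}$, which is why this construction sits precisely on the borderline and why Theorem~\ref{thm2.1} cannot be improved on such sets.

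The first step is the reduction. For any $o$, $d$ and any indices the triangle inequality $|a_i-a_j-d\mathbb{Z}|\le|a_i-o-d\mathbb{Z}|+|a_j-o-d\mathbb{Z}|$ gives $\max_\ell|a_\ell-o-d\mathbb{Z}|\ge\tfrac12|a_i-a_j-d\mathbb{Z}|$. Taking $a_j=a_1=0$ and $a_i=\beta_i$ yields $\max_\ell|a_\ell-o-d\mathbb{Z}|\ge\tfrac12\,d\,\|\beta_i/d\|$ for each $i$, so with $t:=1/d\ge 1$ we obtain, independently of $o$,
$$
N_{d,o}(A)\ \ge\ \frac12\Big(\max_{2\le i\le k}\|\beta_i t\|\Big)\,t^{1/(k-2)},
$$
and hence $N(A)\ge\tfrac12\inf_{t\ge 1}\big(\max_i\|\beta_i t\|\big)\,t^{1/(k-2)}$.

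The second step bounds this infimum. Fix $t\ge 1$. If $\|t\|\ge\tfrac1{10}$ then, since $\beta_k=1$, the bracket is at least $\|t\|\ge\tfrac1{10}$, so the whole expression is $\ge\tfrac1{10}$. If $\|t\|<\tfrac1{10}$, let $n\ge 1$ be the integer nearest to $t$; since $|\beta_i|\le 1$ one has $\|n\beta_i\|\le\|t\beta_i\|+|t-n|\le\|t\beta_i\|+\|t\|\le 2\max_j\|t\beta_j\|$ for every $i$, while $n\le t+\tfrac12\le\tfrac32 t$, so by the choice of the $\beta_i$,
$$
\max_j\|\beta_j t\|\ \ge\ \tfrac12\max_{2\le i\le k-1}\|n\beta_i\|\ >\ \tfrac{c}{2}\,n^{-1/(k-2)}\ \ge\ \tfrac{c}{3}\,t^{-1/(k-2)}.
$$
In both cases $\big(\max_i\|\beta_i t\|\big)t^{1/(k-2)}\ge\min\{\tfrac1{10},\tfrac{c}{3}\}$, whence $N(A)\ge\tfrac12\min\{\tfrac1{10},\tfrac{c}{3}\}=:c_1>0$, a constant depending only on $A$. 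Since this construction is available for every $k\ge 3$, one obtains arbitrarily large sets $A$ with $N(A)>c_1$.

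The part that genuinely requires input from outside elementary manipulations is the existence of badly approximable $(k-2)$-tuples in every dimension, with an effective positive constant; everything else is the triangle inequality and the trivial estimates above. If one wants a fully self-contained argument rather than a citation to Schmidt or Cassels, the obstacle is to include Perron's polynomial argument showing that a $\mathbb{Q}$-basis $1,\xi,\dots,\xi^{k-2}$ of a real number field of degree $k-1$ satisfies $\max_i\|n\xi^i\|\gg n^{-1/(k-2)}$.
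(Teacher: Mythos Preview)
Your proof is correct and follows essentially the same approach as the paper's: normalise to $a_1=0$, $a_k=1$, choose the intermediate points from a real algebraic number field of degree $k-1$ so that the tuple is badly approximable (the paper cites Cassels for this), eliminate $o$ via differences to reduce to bounding $\max_i\|t\beta_i\|$, and round $t=1/d$ to the nearest integer. The only cosmetic difference is that the paper phrases the final step as a contradiction argument at a specific $\varepsilon$ while you give a direct lower bound with a case split on $\|t\|$; the constants and the underlying mechanism are the same.
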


The next result shows that existence of a very good inhomogeneous approximating lattice implies the existence of a quite good homogeneous simultaneous diophantine approximation.

\begin{theorem}
\label{thm2.4}
Let $A=\{a_1,\dots,a_k\}$ be a set of real numbers with $k>2$ and $a_1<\dots <a_k$. Suppose that for some $d,o\in {\mathbb R}$ with $0<d \leq a_k-a_1$ we have $N_{d,o}<c_2$ where $c_2$ is a positive constant. Then there exists a positive integer $q$ such that
$$
\max\limits_{1\leq i\leq k} ||q\alpha_i||< 6c_2 q^{-1/(k-2)}\ \ \ \text{and}\ \ \ \left\vert \frac{a_k-a_1}{d}-q\right\vert<3c_2 q^{-1/(k-2)},
$$
where $\alpha_i=(a_i-a_1)/(a_k-a_1)$ $(i=2,\dots,k-1)$, and $||.||$ denotes the distance to the nearest integer.
\end{theorem}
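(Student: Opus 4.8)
The plan is to extract the integer $q$ directly from the given inhomogeneous approximation, splitting according to whether $\varepsilon$ is small or large compared with $d$. Write $D=a_k-a_1$ (the diameter of $A$, as the $a_i$ increase) and $\varepsilon=\max_{1\le i\le k}|a_i-o-d\mathbb Z|$, so that the assumption $N_{d,o}<c_2$ reads
$$
\frac{\varepsilon}{d}\Bigl(\frac{D}{d}\Bigr)^{1/(k-2)}<c_2 .
$$
For each $i$ I would fix $n_i\in\mathbb Z$ with $r_i:=a_i-o-dn_i$ satisfying $|r_i|\le\varepsilon$, and put $m_i:=n_i-n_1\in\mathbb Z$; then $m_1=0$ and $\delta_i:=(a_i-a_1)-dm_i=r_i-r_1$ has $|\delta_i|\le2\varepsilon$ for every $i$, so in particular $|D-dm_k|=|\delta_k|\le2\varepsilon$. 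Since $\alpha_1=0$ and $\alpha_k=1$ it is enough to bound $\|q\alpha_i\|$ for $2\le i\le k-1$, and I will test this against the integer $m_i$.

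First suppose $\varepsilon<d/4$. Then $m_id=(a_i-a_1)-\delta_i\in[-2\varepsilon,D+2\varepsilon]$ and $4\varepsilon/d<1$, so $0\le m_i\le m_k$ for all $i$ and $m_k\ge1$; take $q:=m_k$. From $a_i-a_1=dm_i+\delta_i$ and $D=dm_k+\delta_k$ one obtains the identity
$$
q\alpha_i-m_i=\frac{m_k(a_i-a_1)-Dm_i}{D}=\frac{m_k\delta_i-m_i\delta_k}{D}=\frac{m_k\bigl(r_i-t\,r_k-(1-t)r_1\bigr)}{D},\qquad t:=\frac{m_i}{m_k}\in[0,1],
$$
in which $t\,r_k+(1-t)r_1$ is a convex combination of $r_1,r_k\in[-\varepsilon,\varepsilon]$; hence $|q\alpha_i-m_i|\le 2\varepsilon m_k/D\le\tfrac{2\varepsilon}{d}(1+2\varepsilon/D)$, while trivially $|D/d-q|=|\delta_k|/d\le 2\varepsilon/d$. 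Using $q=m_k\le(D/d)(1+2\varepsilon/D)$ to replace $(D/d)^{-1/(k-2)}$ by $(1+2\varepsilon/D)\,q^{-1/(k-2)}$, together with $\varepsilon/d<c_2(D/d)^{-1/(k-2)}$ and $2\varepsilon/D\le2\varepsilon/d<1/2$, bounds these two quantities by $2c_2(1+2\varepsilon/D)^2q^{-1/(k-2)}<6c_2q^{-1/(k-2)}$ and by $2c_2(1+2\varepsilon/D)q^{-1/(k-2)}<3c_2q^{-1/(k-2)}$, as required.

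Now suppose $\varepsilon\ge d/4$. Then the assumption gives $(D/d)^{1/(k-2)}<c_2/(\varepsilon/d)\le4c_2$, so $1\le D/d<(4c_2)^{k-2}$; in particular $c_2>1/4$. Let $q$ be the nearest integer to $D/d$, so $1\le q\le D/d+\tfrac12<(4c_2)^{k-2}+\tfrac12$. Because $(6c_2)^{k-2}-(4c_2)^{k-2}=(6^{k-2}-4^{k-2})c_2^{k-2}>(3/2)^{k-2}-1\ge\tfrac12$ for $k\ge3$, we get $q<(6c_2)^{k-2}$, whence $3c_2q^{-1/(k-2)}>\tfrac12$ and $6c_2q^{-1/(k-2)}>1$. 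Therefore $|D/d-q|\le\tfrac12<3c_2q^{-1/(k-2)}$ and $\|q\alpha_i\|\le\tfrac12<6c_2q^{-1/(k-2)}$.

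I expect the work to lie in the set-up rather than in any single estimate: one has to notice that $\varepsilon<d/4$ already forces every $m_i$ into $[0,m_k]$, and one has to rewrite $m_k\delta_i-m_i\delta_k$ so that the term $-r_1$ shared by $\delta_i$ and $\delta_k$ becomes the convex combination $t\,r_k+(1-t)r_1$. The plain triangle inequality only gives $|m_k\delta_i-m_i\delta_k|\le2\varepsilon(m_k+|m_i|)$, which is off by a factor of $2$ and puts the constants $6$ and $3$ out of reach; after that the rest is bookkeeping, and the choice $\varepsilon=d/4$ for the threshold is precisely what forces the second case into the trivial range $q<(6c_2)^{k-2}$.
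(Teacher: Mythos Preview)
Your argument is correct, but the paper reaches the same conclusion by a shorter, case-free route. The paper sets $t=(a_k-a_1)/d$, takes $q$ to be the nearest integer to $t$ (which in your Case~1 coincides with your $m_k$), and observes directly that
\[
|q\alpha_i-p_i|\le |t\alpha_i-p_i|+|t-q|\,\alpha_i<2\varepsilon_{\mathrm{paper}}+2\varepsilon_{\mathrm{paper}}\cdot 1=4\varepsilon_{\mathrm{paper}},
\]
where $\varepsilon_{\mathrm{paper}}=c_2(d/D)^{1/(k-2)}$ (so $\varepsilon_{\mathrm{paper}}=\varepsilon/d$ in your notation). The point is that the factor $\alpha_i\in[0,1]$ in the second term already tames the error, so there is no need for the convex-combination rewriting of $m_k\delta_i-m_i\delta_k$; the ``off by a factor of $2$'' worry you mention is an artefact of estimating $m_k\delta_i-m_i\delta_k$ termwise instead of going through $t$. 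The conversion from $t^{-1/(k-2)}$ to $q^{-1/(k-2)}$ then uses only $t\ge 1$ and $|q-t|\le\tfrac12$, giving $(q/t)^{1/(k-2)}\le\tfrac32$ and hence the constants $6$ and $3$ without any threshold on $\varepsilon$.

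What your approach buys is a slightly sharper implicit constant in the regime $\varepsilon<d/4$ (you get $2c_2(1+2\varepsilon/D)^2<\tfrac{9}{2}c_2$ rather than $6c_2$), and the convex-combination identity $m_k\delta_i-m_i\delta_k=m_k\bigl(r_i-tr_k-(1-t)r_1\bigr)$ is a nice observation in its own right. The price is the case split and the separate trivial analysis when $\varepsilon\ge d/4$, both of which the paper avoids entirely.
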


With these theorems it is possible to explain why $N(A)$ is a fair norm, at least in dimension one. In the first place we should compensate for scaling. If we multiply all $a_i$ by a number $\alpha >0$, then all the distance are also multiplied by $\alpha$ and so, by dividing by $d$ we neutralize scaling. We further have to compensate for the value of $d$, which is close to $(a_k-a_1)/q$. According to Theorem \ref{thm2.4} the expected distances from $q\alpha_i$ to the nearest lattice point is of the order $q^{-1/(k-2)}$. Therefore we multiply by $((a_k-a_1)/d)^{1/(k-2)}$ for compensation.
\vskip.2cm

\noindent {\bf Remark.} Since $\max_{i<j} (a_j-a_i)/d$ is close to $q$, the expected value of $N_{d,o}(A)$ for a random $q$ is not bounded, but $(0.5-o_k(1))q^{1/(k-2)}$.
\vskip.3cm

We shall present some examples using the LLL procedure of Maple 15. Here we have to choose some parameters. Without loss of generality we may assume $a_1 < a_2 < \dots < a_k$. First we subtract $a_1$ from every element of $A$. Then we scale all the obtained numbers by dividing them by the largest distance between the numbers $a_i-a_1$, i.e. by $a_k-a_1$. Further we choose an $\varepsilon$ of order $d_0^{1/(k-2)}$ where $d_0$ is the size of the desired $d$. It may be good to try various values of $\varepsilon$ and to compare the different outcomes.

\vskip.4cm

\noindent{\bf Example 2.1.} We choose $k=6$ and
$$
A=\{a_1=0.814258,\ a_2=1.294837,\ a_3=2.237840,
$$
$$
a_4=2.764132, a_5=4.295116,\ a_6=7.733842\}.
$$
By subtracting $a_1$ and dividing by $a_6-a_1= \max_{i<j} |a_j-a_i|$, computing in 10-digit precision, but writing in 4-digit precision, we get the normalized numbers
$$
(a^N_1, a^N_2, a^N_3, a^N_4, a^N_5, a^N_6) = (0, 0.0695, 0.2057, 0.2818, 0.5030, 1).
$$
We apply the LLL algorithm\footnote{Here and elsewhere, we always use the implementation of LLL in Maple 15.} with $\varepsilon = 10^{-3}$ to the matrix
$$
T=
\begin{pmatrix}
1&0&0&0&0\\
0&1&0&0&0\\
0&0&1&0&0\\
0&0&0&1&0\\
0.0695 & 0.2057 & 0.2818 & 0.5030 & 0.0010
\end{pmatrix}
$$
to get the matrix
$$
B=
\begin{pmatrix}
0.0277&0.1197&0.0549&-0.0426&-0.0140\\
0.0982&-0.0491&-0.0854&-0.1012&0.1310\\
0.1514&-0.0605&-0.1313&-0.0632&-0.1850\\
0.2778&-0.1771&0.1272&0.0122&0.0040\\
-0.1118&-0.1044&0.20242&-0.2679&-0.0880
\end{pmatrix}
.
$$
Subsequently we compute the integer matrix
$$
S:= BT^{-1}=
\begin{pmatrix}
1&3&4&7&-14\\
-9&-27&-37&-66&131\\
13&38&52&93&-185\\
0&-1&-1&-2&4\\
6&18&25&44&-88
\end{pmatrix}
.
$$
We conclude that $(p_1,p_2,\dots,p_6)=(0,1,3,4,7,14)$ with $q=14$ yields a good approximation. The second row of $S$ gives other good values: $(p'_1,p'_2,\dots,p'_6)=(0,9,27,37,66,131)$ with $q'=131$. We put $o=a_1$ and choose
$$
d=(a_6-a_1)/q=0.4943~{\rm and}~d'=(a_6-a_1)/q'=0.0528,
$$
as lattice basis, respectively. Thus the points $(a_1, \dots, a_6)$ are close to $o+d\cdot(0,1,3,4,7,14)$ and to $o+d'\cdot(0,9,27,37,66,131)$. The errors of approximation of $A$ are given by
$$
N_{d,o}(A)=\frac{0.0592}{0.4943} \sqrt[4]{\frac {6.9196}{0.4943}}=0.2316
$$
and
$$
N_{d',o}(A)=\frac{0.0053}{0.0528} \sqrt[4]{\frac {6.9196}{0.0528}}=0.3423.
$$
Hence
$$
N(A)\leq \min(0.2316,0.3423)=0.2316
$$
which is less than
$1$ and $2^{(k-1)/4}=2.3784$,
the bounds of Theorems \ref{thm2.1} and \ref{thm2.2}, respectively. In a similar way we find
$$
N_{d,o}^{(2)}(A) = 0.2731,\ \ \ N_{d',o}^{(2)}(A) = 0.5819.
$$

\vskip.2truecm

If we start with $\varepsilon = 10^{-2}$ in place of $\varepsilon = 10^{-3}$ and we follow the same procedure, then we obtain again $q=14$, yielding the same error values.

\vskip.3truecm

\noindent{\bf Example 2.2.} In the second example we choose
$$
A=\{a_1=0,\ a_2=\sqrt{3}=1.7321,\ a_3=\sqrt{5}=2.2361,
$$
$$
a_4=\sqrt{7}=2.6458,\ a_5=\sqrt{11}=3.3166,\ a_6=\sqrt{13}= 3.6056\}.
$$
The largest distance is $a_6-a_1=\sqrt{13}=3.6056$. Dividing by this number gives the normalized numbers
$$
(a^N_1, a^N_2, a^N_3, a^N_4, a^N_5, a^N_6) =
(0,0.4804,0.6202, 0.7338,0.9199,1).
$$
Applying the LLL algorithm to the matrix
$$
\begin{pmatrix}
1&0&0&0&0\\
0&1&0&0&0\\
0&0&1&0&0\\
0&0&0&1&0\\
0.4804&0.6202&0.7338&0.9199&0.0010
\end{pmatrix}
$$
we get a matrix $B$ with first row
$$
\begin{pmatrix}
-0.0577&-0.0261&-0.0699&0.0201&-0.1500
\end{pmatrix}
.
$$
Subsequently we compute the integer matrix $S= BT^{-1}$ and obtain
$$
\begin{pmatrix}
72&93&110&138&-150
\end{pmatrix}
$$
as its first row. We conclude that $q=150$ together with
$$
(p_1,p_2,\dots,p_6)=(0,72,93,110,138,150)
$$
yields a good approximation. We get
$$
o=a_1=0,d=\sqrt{13}/q=0.0240.
$$
Thus the points $(a_1, \dots, a_6)$ are close to $d\cdot(0,72,93,110,138,150)$. In this way we obtain
$$
N(A)\leq N_{d,o}(A)=0.2447,\ \ \ N^{(2)}(A) \leq N^2_{d,o}(A) =0.3374.
$$
The upper bounds are again less than the  bounds from Theorems \ref{thm2.1} and \ref{thm2.2}.
\vskip.2cm

\noindent If we start with $\varepsilon = 10^{-2}$ in place of $\varepsilon = 10^{-3}$ and we follow the same procedure, we obtain $q=8$, $N_{d,o}(A)= 0.6036, N_{d,o}^{(2)}(A)= 0.6970$. \qed
\vskip.3cm

The next theorem shows that a well approximating lattice with similarly sized $d$ cannot be much better than the lattice we found by the LLL algorithm.

\begin{theorem} \label{thm2.5}
Suppose the LLL algorithm applied to $a_1=0< a_2<\dots<a_{k-1} < a_k=1$ and $\varepsilon > 0$ yields $\v{b_1}$ as the first row of the matrix $B$. Then for every $d'> \frac{\sqrt{k}~ 2^{k/2-1}\varepsilon}{|\v{b_1}|}$ and every choice of integers $p'_1=0,p'_2, \dots, p'_k=q'$ we have
$$ \max_{i=1, \dots, k} |a_i-p'_id'| \geq \frac {2^{1-k/2}}{q'\sqrt{k}} |\v{b_1}|,$$
whereas the LLL algorithm finds integers $p_1=0, p_2, \dots, p_{k-1}, \\p_k=q=1/d$ such that $$ \max_{i=1, \dots, k} |a_i-p_id| \leq \frac{|\v{b_1}|}{q}.$$
\end{theorem}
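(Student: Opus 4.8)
The statement is really two separate inequalities, and I would dispatch the second one first since it is immediate. The vector $\v{b_1}$ is the first row of the LLL-reduced matrix $B$, so it lies in the lattice $L$ spanned by the rows of $T$ and has the form $\v{b_1}=\pm(qa_2-p_2,\dots,qa_{k-1}-p_{k-1},q\varepsilon)$ for suitable integers $q,p_2,\dots,p_{k-1}$, with $q$ read off (up to sign) from the last coordinate; choose signs so that $q>0$. Put $d=1/q$, $p_1=0$, $p_k=q$. Then $a_1-p_1d=0$ and $a_k-p_kd=1-q(1/q)=0$, while for $2\le i\le k-1$ we have $|a_i-p_id|=|qa_i-p_i|/q$, and $|qa_i-p_i|$ is the absolute value of a coordinate of $\v{b_1}$, hence at most $|\v{b_1}|$. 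This gives $\max_i|a_i-p_id|\le|\v{b_1}|/q$.

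For the first (harder) inequality the only property of the LLL-reduced basis that I would use is $|\v{b_1}|\le 2^{(k-2)/2}\lambda_1(L)$, i.e. every nonzero $\v{v}\in L$ satisfies $|\v{v}|\ge 2^{1-k/2}|\v{b_1}|$. Let $d'>\theta:=\sqrt k\,2^{k/2-1}\varepsilon/|\v{b_1}|$ and let $p_1'=0,p_2',\dots,p_k'=q'$ be integers; the cases $q'\le 0$ are trivial (then the right-hand side is $\le 0$, or else $|a_k-p_k'd'|\ge 1$), so assume $q'\ge 1$. The natural lattice vector to examine is $\v{v}=(q'a_2-p_2',\dots,q'a_{k-1}-p_{k-1}',q'\varepsilon)\in L$; its last entry $q'\varepsilon$ is nonzero, so $|\v{v}|^2\ge 2^{2-k}|\v{b_1}|^2$.

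Now write $\delta_i=a_i-p_i'd'$ and $M=\max_i|\delta_i|$ (one may assume $M<1$, since otherwise $M$ already beats the claimed bound once $\varepsilon<1$). From $q'd'=1-\delta_k$ one gets the bookkeeping identities $q'a_i-p_i'=q'\delta_i-p_i'\delta_k$ and $q'\varepsilon=(1-\delta_k)\varepsilon/d'$. The lower bound on $d'$ yields $q'\varepsilon<(1-\delta_k)\,2^{1-k/2}|\v{b_1}|/\sqrt k$, so the $(q'\varepsilon)^2$ term of $|\v{v}|^2$ is at most a $(1-\delta_k)^2/k$-fraction of $2^{2-k}|\v{b_1}|^2$. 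Substituting into $|\v{v}|^2=\sum_{i=2}^{k-1}(q'a_i-p_i')^2+(q'\varepsilon)^2\ge 2^{2-k}|\v{b_1}|^2$ and bounding $\sum_{i=2}^{k-1}(q'a_i-p_i')^2$ from above via $|\delta_i|,|\delta_k|\le M$ and $|p_i'|=|a_i-\delta_i|/d'\le q'(a_i+M)/(1-M)$ (which uses $0<a_i<1$), one isolates a lower bound $q'^2M^2\ge c(k,M)\cdot 2^{2-k}|\v{b_1}|^2$; the threshold on $d'$ is tuned precisely so that the $\varepsilon$-term eats at most a $\tfrac1k$-fraction of $\lambda_1^2$, giving $c(k,M)\ge 1/k$ and hence $M\ge 2^{1-k/2}|\v{b_1}|/(q'\sqrt k)$.

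The one genuinely delicate point — and the place I expect the work to be — is the cross term $p_i'\delta_k$ in $q'a_i-p_i'=q'\delta_i-p_i'\delta_k$: since $|p_i'|$ can be of order $q'$, a blunt triangle inequality here loses a constant factor and is too lossy for the stated constant. One has to exploit that $d'>\theta$ bounds $q'$ (and therefore $q'\varepsilon$ and the deviation $\delta_k=1-q'd'$), and it is cleanest to estimate the whole sum $\sum_{i=2}^{k-1}(q'a_i-p_i')^2$ at once, or to split into the regimes ``$|\delta_k|$ small'' and ``$|\delta_k|$ comparable to $M$''; with that care the numerology closes. This is constant-chasing rather than a conceptual obstacle — the conceptual content is just that any competing $d'$ of comparable size produces a nonzero vector of $L$, which LLL forbids from being much shorter than $\v{b_1}$.
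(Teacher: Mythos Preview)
Your treatment of the second inequality matches the paper's and is correct.

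For the first inequality the paper's argument is much shorter than yours because it does not treat $d'$ and $q'$ as independent: it simply \emph{puts} $d'=1/q'$. With that choice your $\delta_k=1-q'd'$ is identically zero, the ``genuinely delicate'' cross term $p'_i\delta_k$ disappears, and the proof is two lines: since $\v{v}=(p'_2-q'a_2,\dots,p'_{k-1}-q'a_{k-1},-q'\varepsilon)$ has $k-1$ coordinates and $|\v{v}|^2\ge 2^{2-k}|\v{b_1}|^2$, the largest coordinate in absolute value is at least $2^{1-k/2}|\v{b_1}|/\sqrt{k}$; the hypothesis $d'>\sqrt{k}\,2^{k/2-1}\varepsilon/|\v{b_1}|$ is precisely $|q'\varepsilon|<2^{1-k/2}|\v{b_1}|/\sqrt{k}$, so the last coordinate is not the maximum, hence some $|q'a_i-p'_i|$ already meets the bound, and dividing by $q'$ finishes.

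You have read the statement as allowing $d'$ and $q'$ to vary independently. Under that literal reading the paper's own proof does not cover the general case either, and your proposal then carries a genuine gap: you never carry out the ``with that care the numerology closes'' step. A rough pass through your outline---bounding $|q'a_i-p'_i|\le q'|\delta_i|+|p'_i|\,|\delta_k|$ with $|p'_i|$ of order $q'$, and $(q'\varepsilon)^2$ controlled only by $(1-\delta_k)^2/k$ times $2^{2-k}|\v{b_1}|^2$ rather than $1/k$ times---loses at least a constant factor relative to the stated bound, so it is not clear the constants actually close. Either adopt the paper's reading $d'=1/q'$, in which case the whole $\delta_k$ apparatus is unnecessary and your sketch collapses to the paper's two-line argument, or actually execute the case analysis you describe and exhibit the constant; ``constant-chasing rather than a conceptual obstacle'' is a description of the remaining work, not a substitute for it.
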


For the proofs of the theorems we first turn to Theorem \ref{thm2.2}.

\begin{proof}[Proof of Theorem \ref{thm2.2}] Let $A= \{a_1, \dots, a_k\}\subset \mathbb{R} \ \ (k \geq 3)$.
Let $o=a_1$, and $\alpha_i=(a_i-a_1)/(a_k-a_1)$ for $i=2,\dots,k-1$. Then, by Lemma 8 of \cite{bs}, for any $\varepsilon\in (0,1)$ integers $p_2,\dots,p_{k-1}$ and $q$ can be found in polynomial time such that
$$
|p_i-q\alpha_i|\leq\varepsilon\ \text{for}\ i=2,\dots,k-1,
$$
$$
1\leq q\leq 2^{(k-2)(k-1)/4}\varepsilon^{-k+2}.
$$
We note that the same assertion follows already from Proposition 1.39 of \cite{lll} in case of $A\subset \mathbb{Q}$. Put
$d=(a_k-a_1)/q$.
For $i=2,\dots,k-1$ we have
\begin{equation} \label{ub}
|p_i-q\alpha_i|\leq\varepsilon\leq 2^{(k-1)/4}q^{-1/(k-2)}.
\end{equation}
Since
\begin{equation} \label{eq}
|p_i-q\alpha_i|=|p_id-(a_i-o)|/d\ \ \ (i=2,\dots,k-1),
\end{equation}
we obtain by \eqref{ub} and \eqref{eq}, putting $p_1=0, p_k=q$,
$$
N(A)\leq N_{d,o}(A)= \frac{\max\limits_{i=1,\dots,k} |p_id-(a_i-o)|}{d} \left(\frac{a_k-a_1}{d}\right)^{\frac{1}{k-2}}
\leq 2^{(k-1)/4}.
$$
For the upper estimate for $N^{(2)}(A)$ we use the bound 0 for $i=1,k$ and the bound $2^{(k-1)/4}$ for $i=2, \dots, k-1$.
\end{proof}

\begin{proof}[Proof of Theorem \ref{thm2.1}] The proof goes along the same lines as that of Theorem \ref{thm2.2}. The only difference is that in place of Lemma 8 of \cite{bs}, we use a theorem of Dirichlet (see Schmidt \cite{sch}, Chapter II) in \eqref{ub}, guaranteeing the existence of an integer $q$ such that
$$
||q\alpha_i||<q^{-1/(k-2)}\ \ \ (i=2,\dots,k-1)
$$
where $||.||$ denotes the distance to the nearest integer.
\end{proof}

\begin{proof}[Proof of Theorem \ref{thm2.3}]
Let $a_1=0,a_2,\dots,a_{k-1}, a_{k}=1$ be $k$ numbers in a real algebraic number field of degree $k-1$ such that $a_2,\dots,a_{k-1},1$ are linearly independent over ${\mathbb Q}$ and $a_1<a_2<\dots < a_{k-1} <a_k$. Then, by Theorem III on p. 79 of \cite{ca}, there exists a positive real number $\gamma$ (depending only on $a_2,\dots,a_{k-1}$) such that for all positive integers $q$ we have
\begin{equation} \label{cas}
\max\limits_{2\leq i\leq k-1}||q a_i||\geq \gamma q^{-1/(k-2)}.
\end{equation}
Suppose that for some real numbers $o,d, \varepsilon$ with $0<d \leq 1, 0 < \varepsilon < 1/4 $ we have
\begin{equation}
\label{eqapprox3}
\max\limits_{1\leq i\leq k} |a_i-p_id-o|<\varepsilon d,
\end{equation}
where the $p_i$ are integers minimizing the expressions $|a_i-p_id-o|$.
Then
\begin{equation}
\label{eqapprox0}
\max\limits_{1\leq i\leq k} |(a_i-a_1)-(p_i-p_1)d|<2\varepsilon d.
\end{equation}
Set $t=1/d$ and let $||.||$ denote the distance to the nearest integer. Then we have, by $a_1=0, a_k=1$,
\begin{equation}
\label{eqapprox1}
\max\limits_{2\leq i\leq k-1}||ta_i|| <2\varepsilon \ \ \ \text{and}\ \ \ ||t||<2\varepsilon.
\end{equation}
Let $q$ be an integer with $|t-q| \leq 1/2.$
Then for $i=2,\dots,k-1$ we obtain, by \eqref{cas} and \eqref{eqapprox1},
$$
||ta_i||\geq ||qa_i||-|t-q|a_i = ||qa_i||-||t||a_i\geq
\gamma q^{-1/(k-2)}-2\varepsilon.
$$
Observe that $t/q\geq 1/2$. By choosing
$\varepsilon= \gamma /(4q^{1/(k-2)}),$
the above inequality yields
$$
\max\limits_{2\leq i < k}||ta_i||\geq 2 \varepsilon
$$
which contradicts \eqref{eqapprox1}. Hence for the chosen value of $\varepsilon$ inequality \eqref{eqapprox3} does not hold. It follows that
$$N_{d,o}(A)= \frac{ \max\limits_{i=1,\dots,k} |a_i-o-d\mathbb{Z}|}{d^{1+1/(k-2)}} \geq \varepsilon t^{1/(k-2)} \geq \frac{\gamma}{4}\left(\frac tq \right)^{1/(k-2)} \geq \frac{\gamma}{8}.$$
\end{proof}

\begin{proof}[Proof of Theorem \ref{thm2.4}] Let $o$ and $d$ be real numbers with $0<d \leq a_k-a_1$. Suppose that $N_{d,o}<c_2$ where $c_2$ is an arbitrary positive real. Then we have
$$
d^{-1}\max\limits_{1\leq i\leq k} |a_i-p_id-o|<\varepsilon:= c_2 \left( \frac{d}{a_k-a_1} \right)^{1/(k-2)}.
$$
Hence $$d^{-1}\max\limits_{1\leq i\leq k} |a_i-a_1-(p_i-p_1)d|<2\varepsilon.$$
Put $t=(a_k-a_1)/d$ and $\alpha_i=(a_i-a_1)/(a_k-a_1)$ for $i=1, \dots, k$. Then
\begin{equation}
\label{eqapprox2}
\max\limits_{2\leq i\leq k-1}||t\alpha_i|| <2\varepsilon \ \ \ \text{and}\ \ \ ||t||<2\varepsilon.
\end{equation}
Thus choosing integers $q$ such that $|t-q|=||t||$ and $p_i$ such that $|t\alpha_i - p_i| = ||t \alpha_i||$, by \eqref{eqapprox2} we get for all $i=2,\dots,k-1$
$$
|q\alpha_i-p_i|<|t\alpha_i-p_i|+|t-q|\alpha_i< 4\varepsilon.
$$
Hence by the definition of $\varepsilon$ and $t$,
noting that by $t \geq 1$ we have $q>0$ and $q/t \leq 4/3$, we derive
$$
\max\limits_{2\leq i\leq k-1} ||q\alpha_i||<4 \varepsilon =4c_2 t^{-1/(k-2)}=4c_2\frac{(q/t)^{1/(k-2)}}{q^{1/(k-2)}}
<\frac{16}{3}c_2 q^{-1/(k-2)}.
$$
Similarly, by \eqref{eqapprox2},
$$
\left\vert \frac{a_k-a_1}{d}-q\right\vert=|t-q|<2\varepsilon <\frac{8}{3}c_2 q^{-1/(k-2)}.
$$
\end{proof}

\begin{proof}[Proof of Theorem \ref{thm2.5}]
Let $L$ be the lattice generated by the $k-1$ vectors
$$
(1,0,0,\dots,0),\dots,(0,\dots,0,1,0), (a_2,\dots,a_{k-1},\varepsilon)
$$
in ${\mathbb R}^{k-1}$. According to \cite{lll} Theorem 1.11 we have, for every lattice point $\v{x}\in L$,
$$ |\v{b_1}|^2 \leq 2^{k-2} |\v{x}|^2,$$
where $\v{b_1}$ is the the shortest vector of an LLL-reduced basis of $L$. Clearly, any lattice point $\v{x}\in L$ can be written as
$$\v{x} = \left(p'_2 - q'a_2, \dots, p'_{k-1} - q'a_{k-1}, -q' \varepsilon \right),$$
where $p'_2, \dots, p'_{k-1}, p'_k=q'$ are integers.
Hence $$ \max \left( \max_{i=2, \dots, k-1} |p'_i-q'a_i|, |q'\varepsilon| \right)^2 \geq \frac {2^{2-k}}{k} |\v{b_1}|^2.$$
So provided that $|q' \varepsilon| < \frac {2^{1-k/2}}{\sqrt{k}} |\v{b_1}|$, we have
$$ \max_{i=2, \dots, k-1} |p'_i-q'a_i| \geq \frac {2^{1-k/2}}{\sqrt{k}}~ |\v{b_1}|.$$
Put $d' = 1/q'$. Then, provided that $d' > \frac {2^{k/2-1}\sqrt{k}~ \varepsilon}{|\v{b_1}|}$, we have
$$ \max_{i=2, \dots, k-1} |a_i-p'_id'| \geq \frac {2^{1-k/2}}{q'\sqrt{k}} |\v{b_1}|.$$

For the second part, observe that it follows from the algorithm in \cite{lll} that for some integers $p_2,\dots,p_{k-1},q$ we have
$$  \sum_{i=2}^{k-1} (p_i-qa_i)^2 +(q \varepsilon)^2 = |\v{b_1}|^2.$$
Hence $|p_i-qa_i| \leq |\v{b_1}|$ which implies $|a_i-p_i/q| \leq |\v{b_1}|/q$ for $i=1, \dots, k$ in view of $a_1=0=p_1, a_k=1, q=p_k$.
\end{proof}

\section{Lattices with the basis vectors in given directions}
\label{1d}

For any given finite set $A \subset \mathbb{R}^n$ which do not fit into a hyperplane we generate vectors $\v{d}=(d_1, \dots, d_n)$, $\v{o}=(o_1, \dots, o_n) \in \mathbb{R}^n$ such that every element of $A-\v{o}$ is close to the lattice $\Lambda = (d_1 \mathbb{Z},\dots,d_n\mathbb{Z})$. That is, we approximate $A$ with a rectangular lattice. By a linear transformation one can transfer any other prescribed set of lattice basis vectors to this case.

Let $A=\{\v{a_1},\dots,\v{a_k}\}$ with $\v{a_i}=(a_{i1}, \dots, a_{in}) \in \mathbb{R}^n$ for $i=1, \dots, k$. As norms we use
$$
N_{\v{d},\v{o}}(A) := \frac{\max_{\v{a} \in A} |\v{a}-\v{o}-\Lambda| }{ \Delta}\left( {\frac {{\rm diam}~A}{ \Delta}}\right)^{\frac{n}{k-n-1}},
$$
and
$$
N^{(2)}_{\v{d},\v{o}}(A) :=  \frac{\sqrt{ \sum_{\v{a} \in A} |\v{a}-\v{o}-\Lambda|^2} }{ \Delta}\left( {\frac {{\rm diam}~A}{ \Delta}}\right)^{\frac{n}{k-n-1}},
$$
where $\Delta := \left(\prod_{i=1}^n |d_i| \right)^{1/n}$ is the $n$-th root of the lattice determinant. Again we fix $\v{o}=\v{a_1}$.

\vskip.2cm

We illustrate by two examples how the results of Section \ref{mainres} can be used in this case.

\vskip.2truecm

\noindent{\bf Example 3.1.} We combine Examples 2.1 and 2.2. We choose $n=2$, $k=6$ and
$$
A=\{\v{a_1}=(0.814258,0), \ \v{a_2}=(1.294837,\sqrt{3}),\ \v{a_3}=(2.237840,\sqrt{5}),
$$
$$
\v{a_4}=(2.764132,\sqrt{7}),\ \v{a_5}=(4.295116,\sqrt{11}), \ \v{a_6}=(7.733842,\sqrt{13})\}.
$$
We again calculate in 10-digit precision, write in 4-digit precision and put $\varepsilon  = 10^{-3}$. Recall
$$
\sqrt{3}=1.7321,\sqrt{5}=2.2361,
\sqrt{7}=2.6458,\sqrt{11}=3.3166,\sqrt{13}= 3.6056.
$$
From Examples 2.1 and 2.2 we obtain
$\v{d}=(0.4943, 0.0240)$ and the approximating points become
$$
(0.8143,0.0000), \ (1.3085,1.7307),\ (2.2970, 2.2354),
$$
$$
(2.7913,2.6441), \ (4.2741,3.3171), \ (7.7334, 3.6056).
$$
Using $\Delta=\sqrt{0.4943\cdot 0.0240}=0.1089$ and diam$(A)= |\v{a_6} - \v{a_1}| = 7.8026$,
the values $N_{\v{d},\v{o}}(A) = 9.3622$, $N^{(2)}_{\v{d},\v{o}}(A)= 11.0453$ follow.\\
\qed
\vskip.3cm

In Example 3.1 both coordinates are in increasing order. Of course, this need not be the case. In the next example we permute the second coordinates.

\vskip.2cm

\noindent{\bf Example 3.2.} We choose $n=2$, $k=6$ and
$$
A=\{\v{a_1}=(0.814258,\sqrt{5}),\ \v{a_2}=(1.294837,0),\ \v{a_3}=(2.237840,\sqrt{13}),
$$
$$
\v{a_4}=(2.764132,\sqrt{3}),\ \v{a_5}=(4.295116,\sqrt{11}),\ \v{a_6}=(7.733842,\sqrt{7})\}.
$$
We again calculate in 10-digit precision and write in 4-digit precision, and take $\varepsilon=10^{-3}$. From Examples 2.1 and 2.2 we obtain $\v{d}=(0.4943, 0.0240)$. The approximating lattice remains the same. Therefore the approximating points are obtained from Example 3.1 by making the corresponding permutation. Using $\Delta =  0.1089$ and diam$(A) = |\v{a_6} - \v{a_2}| = 6.9614$, we obtain the values $N(A)\leq N_{\v{d},\v{o}}(A) =8.6761$, $N^{(2)}(A) \leq N^{(2)}_{\v{d},\v{o}}(A)= 10.2364$. That the values are smaller than the corresponding values in Example 3.1 is mainly due to the smaller diameter.
\qed

\section{Approximating with general lattices}
\label{nd}

In this section we present a method for finding well approximating general lattices. First we prove that our strategy provides a good approximation if a very good approximation exists. We generalize the method of Section \ref{mainres} and illustrate how it works through some examples.

\begin{theorem}
\label{apprth}
If $A$ is a finite set of $k$ points in $\mathbb{R}^n$ with $k>n$ such that they do not fit into an $n-1$-dimensional linear manifold and there exist a lattice $\Lambda$, a point $\v{o} \in \mathbb{R}^n$ and an $\varepsilon >0$ such that $|\v{a} - \v{o} -\Lambda| < \varepsilon<1/2$ for all $\v{a} \in A$, then there exist an affine (inhomogeneous linear) transformation $V: \mathbb{R}^n \to \mathbb{R}^n $ and $\v{a_1}, \dots, \v{a_{n+1}} \in A$ such that $V$ maps the lattice point in $\Lambda$ nearest to $\v{a_i}- \v{o}$ to $\v{a_i} - \v{o}$ itself for $i=1,\dots, n+1$ and $|\v{a} -\v{o}- V\Lambda | < 2^n \varepsilon$ for all $\v{a} \in A$.
\end{theorem}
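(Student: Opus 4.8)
For each $\v{a}\in A$ let $\v{p_a}\in\Lambda$ be a lattice point nearest to $\v{a}-\v{o}$, and put $\v{e_a}:=\v{a}-\v{o}-\v{p_a}$, so that $\|\v{e_a}\|<\varepsilon$ for every $\v{a}\in A$. Among all $(n+1)$-element subsets of $A$ I would choose one, say $\{\v{a_1},\dots,\v{a_{n+1}}\}$, for which the volume of the simplex with vertices $\v{p_{a_1}},\dots,\v{p_{a_{n+1}}}$ is maximal. This maximum is positive precisely when the nearest lattice points $\{\v{p_a}:\v{a}\in A\}$ affinely span $\mathbb{R}^n$, which is the expected state of affairs since $A$ itself does not lie in a hyperplane; the contrary, degenerate case --- all $\v{p_a}$ lying in a common affine hyperplane $H$, so that $A-\v{o}$ sits in the $\varepsilon$-neighbourhood of $H$ --- I would treat separately. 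Granting positivity, the $\v{p_{a_i}}$ are affinely independent, so there is a unique affine map $V\colon\mathbb{R}^n\to\mathbb{R}^n$ with $V\v{p_{a_i}}=\v{a_i}-\v{o}$ for $i=1,\dots,n+1$; this $V$ sends the lattice point nearest to $\v{a_i}-\v{o}$ to $\v{a_i}-\v{o}$, as the theorem asks.

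The crux is a single error estimate. Fix $\v{a}\in A$ and let $\v{p_a}=\sum_{i=1}^{n+1}\lambda_i\v{p_{a_i}}$ with $\sum_{i=1}^{n+1}\lambda_i=1$ be the barycentric representation of $\v{p_a}$ with respect to the chosen simplex. Being affine, $V$ preserves this affine combination, so $V\v{p_a}=\sum_{i=1}^{n+1}\lambda_i(\v{a_i}-\v{o})$, and hence
$$
(\v{a}-\v{o})-V\v{p_a}=\Bigl(\v{p_a}-\sum_{i=1}^{n+1}\lambda_i\v{p_{a_i}}\Bigr)+\Bigl(\v{e_a}-\sum_{i=1}^{n+1}\lambda_i\v{e_{a_i}}\Bigr)=\v{e_a}-\sum_{i=1}^{n+1}\lambda_i\v{e_{a_i}},
$$
the first bracket vanishing by the definition of the $\lambda_i$. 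Since $V\v{p_a}\in V\Lambda$, it suffices to bound the right-hand side, and for this I need $|\lambda_i|\le1$ for every $i$. This is where maximality enters: replacing the vertex $\v{p_{a_i}}$ of the chosen simplex by $\v{p_a}$ yields a simplex whose vertices are again nearest lattice points of points of $A$ and whose volume equals $|\lambda_i|$ times that of the original (the usual multilinearity identity for volumes), so maximality forces $|\lambda_i|\le1$; possible coincidences $\v{p_a}=\v{p_{a_j}}$ merely make some $\lambda_i$ equal to $0$ or $1$ and cause no trouble. Consequently
$$
|(\v{a}-\v{o})-V\Lambda|\le\bigl\|(\v{a}-\v{o})-V\v{p_a}\bigr\|\le\|\v{e_a}\|+\sum_{i=1}^{n+1}|\lambda_i|\,\|\v{e_{a_i}}\|<(n+2)\varepsilon .
$$

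It remains to compare $(n+2)\varepsilon$ with the target $2^n\varepsilon$. For $n\ge2$ one has $n+2\le2^n$, so the estimate above already gives $|(\v{a}-\v{o})-V\Lambda|<2^n\varepsilon$ for all $\v{a}\in A$. For $n=1$ the maximal-volume ``simplex'' is just the pair of nearest lattice points at largest mutual distance, i.e.\ the smallest and the largest among the numbers $\v{p_a}$; then every $\v{p_a}$ lies between these two, so $\lambda_1,\lambda_2\ge0$, $\sum_i|\lambda_i|=\sum_i\lambda_i=1$, and the displayed estimate sharpens to $\bigl\|(\v{a}-\v{o})-V\v{p_a}\bigr\|<2\varepsilon=2^1\varepsilon$. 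This completes the plan.

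The step I expect to be the main obstacle is the barycentric bound $|\lambda_i|\le1$: it rests on the vertex-swap volume identity together with the maximal choice of the $(n+1)$-subset, and on checking that degenerate configurations of the nearest lattice points---coincidences among them, or all of them lying in a hyperplane---do not undermine the construction of $V$. The remaining ingredients (the cancellation in the first display, the triangle inequality, and the inequality $n+2\le2^n$ for $n\ge2$) are routine.
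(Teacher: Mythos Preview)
Your argument is correct and follows a genuinely different route from the paper's. The paper selects the $n+1$ anchor points by an iterative greedy procedure---first the two nearest lattice points realising the diameter of $A'$, then the one farthest from their line, and so on---and invokes a separate box lemma (Lemma~\ref{boxlem}) to show that the resulting expansion coefficients satisfy $|q_{ji}|\le 2^{n-j}$; summing these gives $2^n-1$, and hence the final bound $\varepsilon+(2^n-1)\varepsilon=2^n\varepsilon$. Your maximum-volume simplex selection is shorter, requires no auxiliary lemma, and via the vertex-swap volume identity yields the uniform bound $|\lambda_i|\le 1$, leading to the stronger estimate $(n+2)\varepsilon$; this already improves on $2^n\varepsilon$ for $n\ge 3$, so you merely need $n+2\le 2^n$ for $n\ge 2$ together with the convexity refinement for $n=1$. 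The one item you defer---the degenerate possibility that all nearest lattice points $\v{p_a}$ lie in a common hyperplane---is left equally unaddressed in the paper's own proof (Lemma~\ref{boxlem} requires each $b_i>0$), so this is not a defect of your approach relative to theirs.
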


\noindent
We shall use the following lemma.

\begin{lemma} \label{boxlem}
Let a rectangular box
$$
B= \{ 0 \leq x_1 \leq b_1, |x_2| \leq b_2, \dots, |x_n| \leq b_n: b_1, b_2, \dots, b_n \in \mathbb{R}_{> 0}\}
$$
be given in $\mathbb{R}^n$. Set $\v{a''_1}=(b_1, 0, \dots,0)$ and for $i=2,\dots,n$ let $\v{a''_i}$ be a point of the form $(b_{1i}, \dots, b_{ii},0, \dots,0)$ with
\begin{equation}
\label{boxB}
0 \leq b_{1i} \leq b_1, |b_{ji}| \leq b_j ~ {\rm for}~j=2, \dots i-1, b_{ii}=b_i.
\end{equation}
Then every point $\v{x}\in B$ can be written as $\lambda_1 \v{a''_1}+\dots+\lambda_n\v{a''_n}$ with $|\lambda_i| \leq 2^{n-i}$ for $i=1,\dots,n$.
\end{lemma}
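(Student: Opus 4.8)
The plan is to use back-substitution, exploiting the lower-triangular shape of the vectors. Write $\v{x}=\lambda_1\v{a''_1}+\dots+\lambda_n\v{a''_n}$ and compare coordinates. Since $\v{a''_j}$ has vanishing entries in positions $j+1,\dots,n$, the $i$-th coordinate of this identity is
$$
x_i=\lambda_i b_i+\sum_{j=i+1}^{n}\lambda_j b_{ij}\qquad(i=1,\dots,n),
$$
with the convention $b_{11}=b_1$ for the first vector. As all $b_i>0$, this triangular linear system has a unique solution, which one computes from $i=n$ downwards:
$$
\lambda_i=\frac{1}{b_i}\Bigl(x_i-\sum_{j=i+1}^{n}\lambda_j b_{ij}\Bigr).
$$

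I would then establish the bound $|\lambda_i|\le 2^{n-i}$ by downward induction on $i$. For $i=n$ we have $\lambda_n=x_n/b_n$ and $|x_n|\le b_n$, so $|\lambda_n|\le 1=2^{n-n}$. For the inductive step, note that $|x_i|\le b_i$ holds for every $i$ (for $i=1$ because $0\le x_1\le b_1$), and that \eqref{boxB} gives $|b_{ij}|\le b_i$ whenever $i<j$: applying it with $j$ in the role of its index, $i$ lies in $\{1,\dots,j-1\}$, so $b_{ij}$ is one of the entries bounded there. Using the inductive bounds $|\lambda_j|\le 2^{n-j}$ for $j=i+1,\dots,n$ we obtain
$$
|\lambda_i|\le 1+\sum_{j=i+1}^{n}2^{n-j}\frac{|b_{ij}|}{b_i}\le 1+\sum_{j=i+1}^{n}2^{n-j}=1+(2^{n-i}-1)=2^{n-i},
$$
the middle equality being the geometric sum. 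This proves the lemma.

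There is no real obstacle here: the proof is essentially a verification, and the only delicate points are two bookkeeping checks — that $|b_{ij}|\le b_i$ for every pair $i<j$ that occurs, so that $b_i$ cancels in the estimate, and that $\sum_{j=i+1}^{n}2^{n-j}=2^{n-i}-1$, which is exactly what makes the constant come out sharp rather than merely of the right order. One should also remark at the outset that the lower-triangular matrix with diagonal $b_1,\dots,b_n$ is invertible, so the coefficients $\lambda_i$ exist and are uniquely determined and the back-substitution is legitimate.
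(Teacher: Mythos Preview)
Your proof is correct. Both arguments are elementary, but they are organized differently: the paper proceeds by induction on the dimension $n$, embedding $B_n$ into $B_{n+1}$ and applying the induction hypothesis twice (once to the first $n$ coordinates of $\v{x}$ and once to those of $\v{a''_{n+1}}$), whereas you fix the dimension and perform back-substitution through the triangular system, inducting downward on the coordinate index $i$. Your route is somewhat more direct --- it avoids the bookkeeping of passing between $B_n$ and $B_{n+1}$ and yields the unique coefficients explicitly --- while the paper's inductive framing makes the doubling $2^{n-i}+2^{n-i}=2^{n+1-i}$ visible as the mechanism behind the bound. Both reach the same sharp constant via the same geometric sum $\sum_{j=i+1}^{n}2^{n-j}=2^{n-i}-1$.
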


\begin{proof}
By induction on $n$. For $n=1$ the assertion is obvious.
Suppose the statement is true for $n$. Then set
$$
B_{n+1} =  \{ 0 \leq x_1 \leq b_1,  |x_2| \leq b_2, \dots, |x_{n+1}| \leq b_{n+1}: b_1,\dots, b_{n+1} \in \mathbb{R}_{> 0}\}.
$$
We identify the tuple $(x_1, \dots, x_n)\in B_n$ with $(x_1, \dots, x_n,0)\in B_{n+1}$ and write $\v{b_{n+1}} = (0,\dots, 0, b_{n+1})$. Any point $\v{x}\in B_{n+1}$ can be written as $\v{x'}+\mu\v{b_{n+1}}$ with $\v{x'}\in B_n$ and $-1 \leq \mu \leq 1$. By the induction hypothesis $\v{x'}$ can be written as $\lambda _1 \v{a''_1} + \dots + \lambda_n\v{a''_n}$ with $|\lambda_i| \leq 2^{n-i}$ for $i=1,\dots,n$. By the same hypothesis $\v{a''_{n+1}}$ can be written as $\mu \v{a''_1} + \dots +\mu_n \v{a''_n}+\v{b_{n+1}}$ with $|\mu_i| \leq 2^{n-i}$ for $i=1, \dots, n$. Hence
$$
\v{x} = \lambda_1 \v{a''_1} + \dots + \lambda_n \v{a''_n} + \mu (\v{a''_{n+1}} - \mu \v{a''_1} \dots - \mu_n \v{a''_n})
$$
$$
= (\lambda_1 - \mu \mu_1) \v{a''_1} + \dots +(\lambda_n - \mu \mu_n) \v{a''_n} + \mu \v{a''_{n+1}}.
$$
It follows that for $i=1, \dots, n$ the coefficient of $\v{a''_i}$ satisfies
$$
|\lambda_i - \mu \mu_i| \leq 2^{n-i} + 2^{n-i}= 2^{n+1-i}.
$$
\end{proof}

\begin{proof} [Proof of Theorem \ref{apprth}]
Suppose that a lattice $\Lambda \subset \mathbb{R}^n$, $\v{o} \in \mathbb{R}^n$ and $\varepsilon>0$ are such that $|\v{a} - \v{o} -\Lambda| < \varepsilon<1/2$ for all $\v{a} \in A$.
Let $\v{a'}$ denote the lattice point of $\Lambda$ nearest to $\v{a}-\v{o}$ for $\v{a}\in A$ and let $A'$ be the set of such lattice points $\v{a'}$. Then, for $\v{a}\in A$,
$|\v{a} -\v{o}-\v{a'}| < \varepsilon$. Without loss of generality we assume that $\v{a'_1}, \v{a'_2}$ are such that $|\v{a'_2} - \v{a'_1}|$ equals the diameter $b_1$ of $A'$, $\v{a'_3} \in A'$ is such that the distance $b_2$ from $\v{a'_3}$ to the line through $\v{a'_1}$ and $\v{a'_2}$ is maximal, $\v{a'_4}$ is such that the distance $b_3$ from $\v{a'_4}$ to the plane through $\v{a'_1}, \v{a'_2}, \v{a'_3}$ is maximal, and so on up to $b_n$. Let $\v{a'_{n+2}},\dots, \v{a'_k}$ be the remaining points of $A'$. Label the elements of $A$ accordingly. Hence, for $i=1, \dots,k$ we have $|\v{a_i} -\v{o}-\v{a'_i}| < \varepsilon$.
Set
$$
B= \{ 0 \leq x_1 \leq b_1, |x_2| \leq b_2, \dots, |x_n| \leq b_n\}.
$$
Consider the affine transformation $U$ for which $U\v{a'_1} =(0,\dots,0)$ and $U\v{a'_{i+1}}=(b_{1i},b_{2i},\dots, b_{i-1,i},b_i,0\dots,0)$ $(i=1,\dots,n)$. Then by the choice of $\v{a'_1},\dots,\v{a'_{n+1}}$ we have that $U\v{a'_i}\in B$ for $i=1,\dots,k$, and also that $U\v{a_1},\dots,U\v{a_{n+1}}$ satisfy the conditions of Lemma \ref{boxlem}. Hence every $U\v{a'_i}$ can be written as $q_{1i}U \v{a'_2} +q_{2i} U\v{a'_3}+\dots +q_{ni} U\v{a'_{n+1}}$ with $|q_{ji}| \leq 2^{n-j}$ for $j=1, \dots, n$. Therefore every $\v{a'_i}$ can be written as $\v{a'_i}= q_{1i}\v{a'_2} +q_{2i} \v{a'_3}+ \dots +q_{ni} \v{a'_{n+1}}$ with $|q_{ji}| \leq 2^{n-j}$ for $j=1, \dots, n$.

Let $V$ be the affine transformation which maps $\v{a'_i}$ to $\v{a_i}-\v{o}$ for $i=1,\dots,n+1$. Then $|\v{a'_i} - V\v{a'_i}| < \varepsilon$ for $i=1,\dots, n+1$.
Let $\Lambda' = V \Lambda$. Then for $i=1, \dots, k$ we have $V\v{a'_i} \in \Lambda'$ and
$$
|\v{a_i} - \v{o}-V\v{a'_i}| \leq |\v{a_i}-\v{o} - \v{a'_i}| + |\v{a'_i} -V\v{a'_i}|
$$
$$
\leq \varepsilon +\sum_{j=1}^n |q_{ji}| |\v{a'_{j+1}} -V\v{a'_{j+1}}| \leq \varepsilon +\varepsilon \sum_{j=1}^n 2^{n-j}  = 2^n \varepsilon.
$$
\end{proof}

Theorem \ref{apprth} shows that the original inhomogeneous problem is not far from a homogeneous problem. We follow this idea in our treatment.

\vskip.1cm

Let $A=\{\v{a_1},\dots,\v{a_k}\}\subset {\mathbb R}^n$, and write $\v{a_i}=(a_{1i},\dots,a_{ni})$ for $i=1,\dots,k$.
we apply first a normalization as we did in Section \ref{mainres}, too. We choose $n+1$ points of $A$, say $\v{a_1}, \dots, \v{a_{n+1}}$ as in the proof of Theorem \ref{apprth}. We first apply a shift which moves $\v{a_1}$ to the origin $\v{a_1^N}=\v{o}$ and subsequently a linear transformation $W$ which moves $\v{a_{i+1}}-\v{a_1}$ to the unit vector $\v{a_{i+1}^N}:=\v{e_i}$ for $i=1,\dots,n$. By this shift and transformation the $k-n-1$ remaining points of $A$, $\v{a_{n+2}},\dots,\v{a_{k}}$ say, move to points $\v{a_{n+2}^N},\dots,\v{a_k^N}$, respectively. By doing so we save $n+1$ columns in the application of the LLL algorithm.

Consider the $(k-1)\times (k-1)$ matrix
$$
T:=
\begin{pmatrix}
1&0&\cdots&0&0&0&\cdots&0\\
0&1&\cdots&0&0&0&\cdots&0\\
\vdots&\vdots&\vdots
&\vdots&\vdots&\vdots&\vdots&\vdots\\
0&0&\cdots&1&0&0&\cdots&0\\
a^N_{1(n+2)}&a^N_{1(n+3)}&\cdots&
a^N_{1k}&\varepsilon&0&\cdots&0\\
a^N_{2(n+2)}&a^N_{2(n+3)}&\cdots&
a^N_{2k}&0&\varepsilon&\cdots&0\\
\vdots&\vdots&\vdots&\vdots&\vdots
&\vdots&\vdots&\vdots\\
a^N_{n(n+2)}&a^N_{n(n+3)}&\cdots&
a^N_{nk}&0&0&\hdots&\varepsilon
\end{pmatrix}
$$
with a small $\varepsilon$, to be specified later in an appropriate way. Let $B=(b_{ij})_{i,j=1,\dots,k-1}$ denote the $(k-1)\times (k-1)$ matrix obtained by applying the LLL algorithm to the rows of $T$ as in \cite{lll}. We expect the entries $b_{ij}$ to be relatively small. There exists a unimodular $(k-1)\times (k-1)$ transformation matrix $S$ such that $B=ST$ holds. We set
$$
S:=
\begin{pmatrix}
p_{11}&\cdots&p_{1(k-n-1)}&q_{11}&\cdots&q_{1n}\\
p_{21}&\cdots&p_{2(k-n-1)}&q_{21}&\cdots&q_{2n}\\
\vdots&\vdots&\vdots&\vdots&\vdots&\vdots\\
p_{(k-1)1}&\cdots&p_{(k-1)(k-n-1)}&q_{(k-1)1}&\cdots&q_{(k-1)n}\\
\end{pmatrix}
.
$$
Note that all the $p_{ij}$ $(i=1,\dots,k-1;j=1,\dots,k-n-1)$ and the $q_{ij}$ $(i=1,\dots,k-1;j=1,\dots,n)$ are integers. For $i=1,\dots,k-1$ and $j=1,\dots,k-n-1$ we have
$$
b_{ij}=p_{ij} + q_{i1}a_{1(n+1+j)}^N+q_{i2}a_{2(n+1+j)}^N+\dots+q_{in}a_{n(n+1+j)}^N.
$$
Since the $b_{ij}$ are `small', it means that (for the above choice of the indices $i,j$)
$$
-p_{ij} \approx q_{i1}a^N_{1(n+1+j)}+q_{i2}a^N_{2(n+1+j)}+\dots+q_{in}a^N_{n(n+1+j)}.
$$
Take indices $i_1,\dots,i_n$ with $1\leq i_1<\dots<i_n\leq k-1$ such that
$$
Q:=
\begin{pmatrix}
q_{i_11}&\hdots&q_{i_1n}\\
\vdots&\vdots&\vdots\\
q_{i_n1}&\hdots&q_{i_nn}
\end{pmatrix}
.
$$
is invertible. Then, recalling $\v{a_j^N}=W(\v{a_j}-\v{o})$ for all $j=1,\dots,k$, we find that
$$
-W^{-1}Q^{-1}
\begin{pmatrix}
-q_{i_1(j-1)}\\
\vdots\\
-q_{i_n(j-1)}
\end{pmatrix}
+
\begin{pmatrix}
o_1\\
\vdots\\
o_n
\end{pmatrix}
=
\begin{pmatrix}
a_{1j}\\
\vdots\\
a_{nj}
\end{pmatrix}
$$
for $j=2,\dots,n+1$, and
$$
-W^{-1}Q^{-1}
\begin{pmatrix}
p_{1j}\\
\vdots\\
p_{nj}
\end{pmatrix}
+
\begin{pmatrix}
o_1\\
\vdots\\
o_n
\end{pmatrix}
\approx
\begin{pmatrix}
a_{1j}\\
\vdots\\
a_{nj}
\end{pmatrix}
$$
for $j=n+2,\dots,k$.
This means that writing $\v{d_i}$ for the $i$-th column of $-W^{-1}Q^{-1}$, we get that the point $\v{a_j}$ is just the shifted lattice point
$$
\v{o}-q_{i_1(j-1)}\v{d_1}-q_{i_2(j-1)}\v{d_2}-\dots-q_{i_n(j-1)}\v{d_n}
$$
for $j=2,\dots,n+1$, and it is close to
$$
\v{o}+p_{1j}\v{d_1}+p_{2j}\v{d_2}+\dots+p_{nj}\v{d_n}
$$
for $j=n+2,\dots,k$. Recall that we also have $\v{a_1}=\v{o}$.
\qed

\vskip.3cm

We illustrate the method by some examples.
In the examples we use the same norms as in Section \ref{1d}, but $\Delta$ is no longer equal to $\left(\prod_{i=1}^n |\v{d_i}| \right)^{1/n}$, but it still equals the absolute value of the $n$-th root of the lattice determinant of the computed vectors $\v{d}$. From the construction it follows that $|{\rm det}(\v{d_1},\dots,\v{d_n})|\cdot|\det(WQ)|=1$. Hence $\Delta = (|\text{det}(WQ)|)^{-1/n}$.

In our first example we give a detailed description of our method.

\vskip.3cm

\noindent {\bf Example 4.1.} We work with the same values as in Example 3.1. We choose $n=2$, $k=6$ and
$$
A=\{\v{a_1}=(0.814258,0),\ \v{a_2}=(1.294837,\sqrt{3}),\ \v{a_3}=(2.237840,\sqrt{5}),
$$
$$
\v{a_4}=(2.764132,\sqrt{7}),\ \v{a_5}=(4.295116,\sqrt{11}),\ \v{a_6}=(7.733842,\sqrt{13})\}.
$$
We choose an affine transformation where $\v{a_1}$ goes to $\v{a_1^N}=(0,0)$, $\v{a_6}$ goes to $\v{a_6^N}=(1,0)$ (these points yield the diameter), and $\v{a_4}$ (the point furthest from the line through $\v{a_1}$ and $\v{a_6}$) goes to $\v{a_4^N}=(0,1)$. Then we get $\v{a_2^N}=(-0.1867, 0.9091)$, $\v{a_3^N}=(-0.0526,0.9167)$, $\v{a_5^N}=(0.2432,0.9222)$. In fact, this transformation is given by $W(\v{a}-\v{o})$, with $\v{o}=\v{a_1}$ and
$$
W=
\begin{pmatrix}
0.2346&-0.1729\\
-0.3197&0.6136
\end{pmatrix}
.
$$
Choosing $\varepsilon=10^{-3}$, we apply the LLL algorithm to the matrix
$$
T=
\begin{pmatrix}
1&0&0&0&0\\
0&1&0&0&0\\
0&0&1&0&0\\
-0.1867 & -0.0526 & 0.2432 & 0.001 & 0\\
0.9091 & 0.9167 & 0.9222 &0 & 0.001
\end{pmatrix}
.
$$
Observe that we spared the columns corresponding to the vectors $\v{a_1^N}$, $\v{a_6^N}$ and $\v{a_4^N}$, hence $T$ is of type $(k-1)\times(k-1)=5\times 5$. We get the matrix
$$
B=
\begin{pmatrix}
-0.0280&0.0114&0.0182&-0.0130&-0.0280\\
-0.0284&0.0246&-0.0319&0.0060&0.0320\\
0.0275&-0.0358&-0.0023&-0.0610&0.0150\\
-0.0279&-0.0193&-0.0616&0.0240&-0.0270\\
-0.0345&-0.0202&0.02943&0.0260&0.0680
\end{pmatrix}
.
$$
Subsequently we compute the integer matrix
$$
S:= BT^{-1}=
\begin{pmatrix}
23&25&29&-13&-28\\
-28&-29&-31&6&32\\
-25&-17&1&-61&15\\
29&26&19&24&-27\\
-57&-61&-69&26&68
\end{pmatrix}
.
$$
We take the $2\times 2$ matrix
$$
Q=
\begin{pmatrix}
-13&-28\\
6&32
\end{pmatrix}
$$
in the right upper corner of $S$. Then a basis of a well approximating lattice to the points to $\v{a^N_1},\dots, \v{a^N_6}$ is given by the column vectors of the matrix
$$
-Q^{-1}=
\begin{pmatrix}
0.1290&0.1129\\
-0.0242&-0.0524
\end{pmatrix}
.
$$
That is, we have
$$
\v{d_1'}=(0.1290,-0.0242),\ \ \ \v{d_2'}=(0.1129,-0.0524).
$$
Recalling our choice for using the indices $1,6,4$ at the normalization step, as approximating points to $\v{a^N_1}, \dots, \v{a^N_6}$ we obtain
$$
\begin{pmatrix}
0&0\\
23&-28\\
25&-29\\
28&-32\\
29&-31\\
13&-6
\end{pmatrix}
\begin{pmatrix}
\v{d_1'}\\
\v{d_2'}
\end{pmatrix}
=
\begin{pmatrix}
0&0\\
-0.1935&0.9113\\
-0.0488&0.9153\\
0&1\\
0.2419&0.9234\\
1&0
\end{pmatrix}
.
$$
Here the entries in the second, third and fifth rows of the matrix on the left hand side come from the first three entries of the first two rows of $S$ (since these are the values $p_{ij}$ for $i=1,2$ and $j=1,2,3$). The zeroes in the first row are clear, since $\v{a_1^N}=(0,0)$. Finally, the entries in the sixth and fourth line come from the identity
$$
(-Q)\cdot (-Q^{-1})=
\begin{pmatrix}
1&0\\
0&1
\end{pmatrix}
=(\v{a_6^N},\v{a_4^N})
.
$$

We also get that besides $\v{o}=\v{a_1}$, a basis of a well approximating lattice to the original points $\v{a_1},\dots,\v{a_6}$ is given by the column vectors of
$$
-W^{-1}Q^{-1}=
\begin{pmatrix}
0.8457&0.6790\\
0.4012&0.2684
\end{pmatrix},
$$
that is, by
$$
\v{d_1}=(0.8457,0.4012),\ \ \ \v{d_2}=(0.6790,0.2684).
$$
Further, we get the following approximating points to $\v{a_1},\dots,\v{a_6}$:
$$
(0.8143,0), (1.2519,1.7132), (2.2642,2.2473),
$$
$$
(2.7641,2.6456),(4.2888,3.3154), (7.7338,3.6056).
$$
Using that $\Delta = 0.2132$, ${\rm diam}(A) = 7.8026$ we obtain
$$
N_{\Lambda,\v{o}}(A)=2.4244,\ \ \ N^{(2)}_{\Lambda,\v{o}}(A) =2.8598.
$$
These values may be compared with the ones $9.3622$ and $11.0453$, respectively, from Example 3.1. The greater flexibility leads to better upper bounds.

The corresponding values for $\varepsilon = 10^{-2}$ in place of $10^{-3}$ are $\v{d_1}=(0.9885,0.5151)$, $\v{d_2}=(-5.9039,-4.7061)$, $\Delta =1.2693$, diam$(A) = 7.8026$,
$$
N_{\Lambda,\v{o}}(A)=1.7633,\ \ \ N^{(2)}_{\Lambda,\v{o}}(A) =2.8511.
$$

\vskip.4cm

\noindent {\bf Example 4.2.} We work with the same values as in Example 3.2. We choose $n=2$, $k=6$, $\varepsilon = 10^{-3}$ and
$$
A=\{\v{a_1}=(0.814258,\sqrt{5}),\ \v{a_2}=(1.294837,0),\ \v{a_3}=(2.237840,\sqrt{13}),
$$
$$
\v{a_4}=(2.764132,\sqrt{3}),\ \v{a_5}=(4.295116,\sqrt{11}),\ \v{a_6}=(7.733842,\sqrt{7})\}.
$$
We take the affine transformation where $\v{a_2}$ goes to $\v{a_2^N}=(0,0)$, $\v{a_6}$ goes to $\v{a_6^N}=(1,0)$ (these points give the diameter), and $\v{a_3}$ (the furthest point from the line through $\v{a_2}$ and $\v{a_6}$) goes to $\v{a_3^N}=(0,1)$. The transformation is given by $W(\v{a}-\v{o})$ with $\v{o}=\v{a_2}$ and
$$
W=
\begin{pmatrix}
0.1740&-0.0455\\
-0.1277&0.3107
\end{pmatrix}
.
$$
We get $\v{a_1^N}=(-0.1854,0.7562)$, $\v{a_4^N} = (0.1768,0.3506)$, and $\v{a_5^N} = (0.3711,0.6475)$.
We apply the LLL algorithm with $\varepsilon = 10^{-3}$ to the matrix
$$
T=
\begin{pmatrix}
1&0&0&0&0\\
0&1&0&0&0\\
0&0&1&0&0\\
-0.1854 & 0.1768 & 0.3711 & 0.001&0\\
0.7562&0.3506&0.6475&0 & 0.001
\end{pmatrix}
$$
and obtain
$$
\begin{pmatrix}
-14&-7&-13&2&19\\
11&-3&-7&31&-7
\end{pmatrix}
$$
as the first two rows of $S$. By inverting the matrix
$$
Q
=
\begin{pmatrix}
2&19\\
31&-7
\end{pmatrix}
,
$$
and using the inverse of $W$ we get as a basis
$$
\v{d_1}=(-0.1232,-0.2161),\ \ \ \v{d_2}=(-0.1998,-0.0714)
$$
of an approximating lattice to the original points. This results in the following approximating points to $\v{a_1}, \dots,\v{a_6}$:
$$
(0.8227, 2.2396), (1.2948, 0), (2.2378, 3.6055),
$$
$$
(2.7567, 1.7267), (4.2951, 3.3088), (7.7338, 2.6458).
$$
The errors of approximation of $A$ are given by
$$
N_{\Lambda,\v{o}}(A)=1.3041,\ \ \ N^{(2)}_{\Lambda,\v{o}}(A) =1.5720.
$$
These values may be compared with the ones $8.6761$ and $10.2364$, respectively, from Example 3.2.

To see the influence of the choice of the $\varepsilon$ we have used our program to compare the results for $\varepsilon = 10^{-i}$ with $i=2,3, \dots, 10$. In Table \ref{tab1}, we give a summary of the results. Notice that, the lattices become smaller, but that the norms do not vary too much. The calculations were made in 20-digit precision.

\begin{table}[tb]
\begin{tabular}{|c|c|c|c|}
\hline
$\varepsilon$ & $\v{d_1}$ & $\v{d_2}$ & $N^{(2)}(A)$\\
\hline\hline
$10^{-2}$ & $(0.1862, 2.7095)$ & $(0.9430, 3.6056)$ & $1.1066$ \\
\hline
$10^{-3}$ & $(1.232, 2.161)\cdot 10^{-1}$ & $(1.998, 0.714) \cdot 10^{-1}$ & $1.5720$\\
\hline
$10^{-4}$ & $(8.800, 3.458) \cdot 10^{-2}$ & $(0.741, 6.068) \cdot 10^{-2}$ & $0.7874$ \\
\hline
$10^{-5}$ & $(3.402, 7.279) \cdot 10^{-1}$ & $(2.293, 5.052) \cdot 10^{-1}$ & $2.1818$ \\
\hline
$10^{-6}$ & $(2.122, -1.098) \cdot 10^{-3}$ & $(2.534, 1.508) \cdot 10^{-3}$ & $0.9039$ \\
\hline
$10^{-7}$ & $(9.513, 7.310) \cdot 10^{-4}$ & $(4.297, -0.853) \cdot 10^{-4}$ & $0.7786$ \\
\hline
$10^{-8}$ & $(4.539, 2.942) \cdot 10^{-4}$ & $(2.158, 2.222) \cdot 10^{-4}$ & $1.5469$ \\
\hline
$10^{-9}$ & $(4.239, 4.273) \cdot 10^{-5}$ & $(3.382, 0.792) \cdot 10^{-5}$ & $1.0819$ \\
\hline
$10^{-10}$ & $(1.027, 0.675)\cdot 10^{-5}$ & $(1.099, 0.311) \cdot 10^{-5}$ & $0.6925$\\
\hline
\end{tabular}\\[2pt]
\caption{Basis vectors and $N^{(2)}(A)$ errors for approximating lattices for $A$ for different values of $\varepsilon$.} \label{tab1}
\end{table}

One of the aims of the project is to recognize hidden structures. In the following example we started with linear combinations with integer coefficients of $(\lg 3, \lg 7)$ and $(\lg 5, \lg 8)$ (where $\lg x$ is the logarithm of $x>0$ to base $10$) and wondered whether the algorithm finds the underlying lattice. When we gave the set $A$ with 4-digit accuracy it did not,  but it did with 7-digit accuracy. With $\varepsilon = 10^{-4}$ we obtained the following.
\vskip.3cm

\noindent {\bf Example 4.3.}
We choose $n=2$, $k=6$, $\varepsilon = 10^{-4}$ and
$$
A=\{\v{a_1}=(0,0),\ \v{a_2}=(72.6836917, 103.2838586),
$$
$$
\v{a_3}=(41.2087354,66.9615022),\ \v{a_4}=(44.7461978, 62.8435663),
$$
$$
\v{a_5}=(51.1493167, 78.2045256),\ \v{a_6}=(10.8279763,11.4749913)\}.
$$
We choose an affine transformation where $\v{a_1}$ goes to $\v{a_1^N}=(0,0)$, $\v{a_2}$ goes to $\v{a_2^N}=(1,0)$, $\v{a_3}$ goes to $\v{a_3^N}=(0,1)$. The transformation is given by $W(\v{a}-\v{o})$ with $\v{o}=\v{a_1}=(0,0)$ and
$$
W=
\begin{pmatrix}
0.1096&-0.0675\\
-0.1691&0.1190
\end{pmatrix}
.
$$
We get $\v{a_4^N}=(0.6656, -0.0882)$, $\v{a_5^N} = (0.3312, 0.6569)$, and $\v{a_6^N} = (0.4129,-0.4655)$. We apply the LLL algorithm with $\varepsilon = 10^{-4}$ and find
$$
\begin{pmatrix}
27&-16&34&-35&42\\
22&53&-11&-41&-60
\end{pmatrix}
$$
as the first two rows of the basis transformation matrix $S$. Then with the usual process we obtain
$$
\v{d_1}=(0.6990,0.9031),\ \ \ \v{d_2} = (1.1761,1.7482)
$$
as a basis for an approximating lattice for the original points. Here we recognize the approximate values $\lg 5$, $\lg 8$, $\lg 15$ and $\lg 56$. This results in the following approximating points to $\v{a_1},\dots,\v{a_6}$:
$$
(0,0), (72.6837, 103.2839), (41.2087, 66.9615),
$$
$$
(44.7462, 62.8436), (51.1493, 78.2045), (10.8280, 11.4750).
$$
The errors of approximation of $A$ are given by
$$
N(A) \leq N_{\Lambda,\v{o}}(A)=1.067 \cdot 10^{-5},\ \ \ N^{(2)} \leq N^{(2)}_{\Lambda,\v{o}}(A) =1.721 \cdot 10^{-5}.
$$
These small errors indicate that the lattice $\Lambda$ is actually found. The coefficients derived from the first two rows of $S$ are given by
$$
(72.6837, 103.2839) = 35 (\lg 5, \lg 8) + 41 (\lg 15, \lg 56),
$$
$$
(41.2087, 66.9615)= -42 (\lg 5, \lg 8) + 60 (\lg 15, \lg 56),
$$
$$
(44.7462, 62.8436)= 27 (\lg 5, \lg 8) + 22 (\lg 15, \lg 56),
$$
$$
(51.1493, 78.2045)= -16 (\lg 5, \lg 8) + 53 (\lg 15, \lg 56),
$$
$$
(10.8280, 11.4750)= 34 (\lg 5, \lg 8)  -11 (\lg 15, \lg 56).
$$
This can be reduced to the values with which we have started:
$$
(72.6837, 103.2839) = 41 (\lg 3, \lg 7) + 76 (\lg 5, \lg 8),
$$
and so on.
\qed

\section{Fine-tuning $N_{o, \Lambda}^{(2)}(A)$}
\label{finetuning}

The results obtained in the previous section can be improved by applying the least squares algorithm in order to find the optimal values of $\v{o}$ and the lattice vectors for the values of the $q_i$'s and $p_i$'s selected after applying the LLL algorithm. As we have seen in Section \ref{nd}, the underlying idea is that an "origin" $\v{o}$ and a lattice spanned by $\v{d_1},\dots,\v{d_n}$ are chosen in such a way that the point $\v{a_j}$ is close to the lattice point
$$
\v{o}-q_{i_1(j-1)}\v{d_1}-q_{i_2(j-1)}\v{d_2}-\dots-q_{i_n(j-1)}\v{d_n}
$$
for $j=2,\dots,n+1$, and to
$$
\v{o}+p_{1j}\v{d_1}+p_{2j}\v{d_2}+\dots+p_{nj}\v{d_n}
$$
for $j=n+2,\dots,k$, respectively.

The least squares method enables us to optimize $\v{o}$, $\v{d_1},\dots,\v{d_n}$ with respect to the sum of the Euclidean distances between the points and the approximating lattice points. Notice that, when applying the least squares method, inversion of the matrix $Q$ as in Section \ref{nd} is no longer needed. We illustrate this by applying the Maple 15 procedure LeastSquares to some treated examples.

We stress that in fact this method minimizes the numerator of the main term of the norm, i.e. the expression
$$
\sqrt{\sum_{\v{a} \in A} |\v{a}-\v{o}-\Lambda|^2}.
$$
However, since the change in the basis vectors is minimal, we expect that the norm itself improves. This is supported by the examples below, too. Again, we compute in 10-digit precision, but write in 4-digit precision.

\vskip.3cm

\noindent {\bf Example 5.1.} This is a continuation of Example 2.2. We started from
$$
A=\{a_1=0,\ a_2=\sqrt{3}=1.7321,\ a_3=\sqrt{5}=2.2361,
$$
$$
a_4=\sqrt{7}=2.6458,\ a_5=\sqrt{11}=3.3166,\ a_6=\sqrt{13}= 3.6056\}
$$
and found in Example 2.2 the tuple
$$
(p_1,\dots,p_6)=(0,72,93,110,138,150).
$$
We apply the least squares algorithm and find $o=0.0007$, $d=0.0240$.
For this choice of $d$ and $o$ we have
$$
N^{(2)}_{d,o}(A)= 0.2766
$$
which is less than $0.3374$ found in Example 2.2.
\qed

\vskip.1cm

\noindent {\bf Example 5.2.} This is a continuation of Example 4.2. We started with
$$
A=\{\v{a_1}=(0.814258,\sqrt{5}),\ \v{a_2}=(1.294837,0),\ \v{a_3}=(2.237840,\sqrt{13}),
$$
$$
\v{a_4}=(2.764132,\sqrt{3}),\ \v{a_5}=(4.295116,\sqrt{11}),\ \v{a_6}=(7.733842,\sqrt{7})\}
$$
and obtained
$$
\begin{pmatrix}
-14&-7&-13&2&19\\
11&-3&-7&31&-7
\end{pmatrix}
$$
as the first two rows of $S$. We found as a basis of an approximating lattice
$$
\v{d_1}=(-0.1232,-0.2161),\ \ \ \v{d_2}=(-0.1998,-0.0714)
$$
and approximating points
$$
(0.8227, 2.2396), (1.2948, 0), (2.2378, 3.6055),
$$
$$
(2.7567, 1.7267), (4.2951, 3.3088), (7.7338, 2.6458)
$$
to the original $\v{a_1},\dots,\v{a_6}$. The error of approximation of $A$ was
$$
N^{(2)}_{\Lambda,\v{o}}(A)=1.5720.
$$
After applying the least squares algorithm to the matrices
$$
\begin{pmatrix}
1&-14&11\\
1&0&0\\
1&-19&7\\
1&-7&-3\\
1&-13&-7\\
1&-2&-31
\end{pmatrix},
\ \ \
\begin{pmatrix}
0.8227&2.2396\\
1.2948& 0 \\
2.2378&3.6055\\
2.7567&1.7267\\
4.2951&3.3088\\
7.7338& 2.6458
\end{pmatrix},
$$
we obtain $\v{o}=(1.2955, 0.0000)$ together with $\v{d_1}=(-0.1231,-0.2162)$, $\v{d_2}=(-0.1998,-0.0715)$ and get the following approximating points to $\v{a_1},\dots,\v{a_6}$:
$$
(0.8284, 2.2403), (1.2955, 0.0000), (2.2357, 3.6073),
$$
$$
(2.7567, 1.7280), (4.2947, 3.3112), (7.7365, 2.6492).
$$
The new error of approximation of $A$ is
$$
N^{(2)}_{\Lambda,\v{o}}(A) = 0.8302.
$$
This value may be compared with the value $1.5720$ from Example 4.2.
\qed

\end{document}